\theoremstyle{plain}
\theoremstyle{definition}
\newtheorem{theorem}{Theorem}[section]
\newtheorem{lemma}[theorem]{Lemma}
\newtheorem{proposition}[theorem]{Proposition}
\newtheorem{corollary}[theorem]{Corollary}
\newtheorem{claim}[theorem]{Claim}
\newtheorem{example}[theorem]{Example}
\newtheorem{remark}[theorem]{Remark}
\newtheorem{question}[theorem]{Question}
\def\Fbf{\mathbf{F}}
\def\Acal{\mathcal{A}}
\def\Ccal{\mathcal{C}}
\def\Lcal{\mathcal{L}}
\def\Mcal{\mathcal{M}}
\def\Pcal{\mathcal{P}}
\def\Tcal{\mathcal{T}}
\def\1bb{\mathbb{1}}
\def\Rbb{\mathbb{R}}
\def\Sbb{\mathbb{S}}
\def\Zbb{\mathbb{Z}}
\def\ra{\rightarrow}
\def\rsqa{\rightsquigarrow}
\def\Ra{\Rightarrow}
\def\hookra{\hookrightarrow}
\def\ov{\overline}
\def\un{\underline}
\def\wtil{\widetilde}
\def\pr{\prime}
\def\pa{\partial}
\def\setm{\setminus}
\DeclareMathOperator{\coh}{coh} 
\DeclareMathOperator{\rk}{rk}
\DeclareMathOperator{\supp}{supp}
\DeclareMathOperator{\Vol}{Vol}
\DeclareMathOperator{\sgn}{sgn}
\begin{document}

\title{Zonotopes whose cellular strings are all coherent}
\date{\today}
\author{Rob Edman, Pakawut Jiradilok, Gaku Liu, Thomas McConville}

\maketitle

\begin{abstract}
A cellular string of a polytope is a sequence of faces stacked on top of each other in a given direction. The poset of cellular strings, ordered by refinement, is known to be homotopy equivalent to a sphere. The subposet of coherent cellular strings is the face lattice of the fiber polytope, hence is homeomorphic to a sphere. In some special cases, every cellular string is coherent. Such polytopes are said to be all-coherent. We give a complete classification of zonotopes with the all-coherence property in terms of their oriented matroid structure. Although the face lattice of the fiber polytope in this case is not an oriented matroid invariant, we prove that the all-coherence property is invariant.
\end{abstract}

\section{Introduction}

We consider the problem of realizing spaces of polytopal subdivisions as the boundary complex of a polytope. For example, the set of triangulations of a convex polygon form the vertices of a polytope known as the associahedron. The facets of an associahedron correspond to diagonals where a facet and vertex are incident exactly when the triangulation includes the diagonal. In this situation, we are given the combinatorics of the polytope, namely the face lattice, and we need to provide a geometric realization.

Among the many polytopal realizations of the associahedron (see \cite{ceballos.santos.ziegler2015many} for a survey), the secondary polytope construction of Gelfand, Kapranov, and Zelevinsky is particularly elegant \cite[Chapter 7]{gelfand.kapranov.zelevinsky2008discriminants}. Given a finite set of points $\Acal$ in $\Rbb^d$, the \emph{secondary polytope} $\Sigma(\Acal)$ is the Newton polytope of the $\Acal$-determinant. Its vertices correspond to \emph{regular triangulations} of $\Acal$, where a triangulation $\Tcal$ is \emph{regular} (or \emph{coherent}) if there exists a function $f:\Acal\ra\Rbb$ such that $\Tcal$ is the set of lower faces of the convex hull of $\{(x,f(x)):\ x\in\Acal\}\subseteq\Rbb^{d+1}$. If $\Acal$ is the set of vertices of a convex polygon, then every triangulation is regular, so the secondary polytope realizes the associahedron.

Given a linear surjection of polytopes $\pi:P\ra Q$, the \emph{Baues poset} $\omega(P,Q)$ is the set of polytopal subdivisions of $Q$ by images of faces of $P$, ordered by refinement; see Section~\ref{subsec_fiber} for a more precise definition. For example, when $P$ is a simplex, the maximally refined subdivisions of $Q$ in $\omega(P,Q)$ are a family of triangulations with specified vertices. For generic functions $\pi$, Baues observed that if $P$ is a $d$-simplex and $Q$ is $1$-dimensional, then $\omega(P,Q)$ is the face lattice of a $(d-1)$-cube \cite{baues:geometry}. If $P$ is a $d$-cube and $Q$ is $1$-dimensional, then $\omega(P,Q)$ is the face lattice of a $(d-1)$-dimensional permutahedron. When $P$ is a $d$-dimensional permutahedron, the maximally refined subdivisions of $Q$ may be identified with reduced words for the longest element of a type $A_d$ Coxeter system. Baues noticed that the order complex of $\omega(P,Q)$ is not always a simplicial sphere, but he conjectured that it is homotopy-equivalent to a sphere. This was proved for any polytopes $P,Q$ with $\dim Q=1$ by Billera, Kapranov, and Sturmfels \cite{billera.kapranov.sturmfels:cellular}. We refer to the survey article by Reiner for many other results of this type \cite{reiner:baues}.

As with triangulations, there is a distinguished subset of polytopal subdivisions in $\omega(P,Q)$ called \emph{coherent} subdivisions. Generalizing the secondary polytope construction, the \emph{fiber polytope} introduced by Billera and Sturmfels is a polytope of dimension $\dim P-\dim Q$ whose faces correspond to coherent subdivisions of $Q$ \cite{billera.sturmfels:fiber}. Hence, the Baues poset $\omega(P,Q)$ contains a canonical spherical subspace of dimension $\dim P-\dim Q-1$, and Baues' original question is whether this subspace is homotopy-equivalent to the full poset $\omega(P,Q)$. We consider the stronger question:

\begin{question}\label{ques_main}
For which projections $f:P\ra Q$ is $\omega(P,Q)$ isomorphic to the face lattice of a polytope of dimension $\dim P-\dim Q$?
\end{question}

Using the fiber polytope construction, the poset $\omega(P,Q)$ is the face lattice of a $(\dim P-\dim Q)$-dimensional polytope exactly when every element of $\omega(P,Q)$ is coherent; see Section~\ref{sec_coherence_poly} for details. Hence, we may rephrase Question~\ref{ques_main} as asking whether every subdivision of $Q$ by faces of $P$ is coherent. If $\pi:P\ra Q$ has this property, we say the projection is \emph{all-coherent}.

In this work, we examine the special case of Question~\ref{ques_main} where $P$ is a zonotope and $Q$ is 1-dimensional. A \emph{zonotope with $n$ zones} is the image of a $n$-dimensional cube under a linear map. To any zonotope with a distinguished linear functional, one may associate an (acyclic) \emph{oriented matroid}, an abstract combinatorial object that records the faces of the zonotope. Some background on oriented matroids is given in Section~\ref{sec_OM}. Given $P$ and $Q$ as above, it is known that the facial structure of the fiber polytope of $\pi:P\ra Q$ is not an oriented matroid invariant. That is, there exists a zonotope with two distinct linear functionals defining the same oriented matroid whose fiber polytopes are not combinatorially equivalent; see Example~\ref{example_zonogon}. However, we prove that the all-coherence property \emph{is} invariant.

Our main result is a characterization of the all-coherence property for the set of cellular strings of a zonotope.

\begin{theorem}\label{thm_main}
Let $Z$ be a zonotope with a generic linear functional $\pi$. The following are equivalent.

\begin{enumerate}
\item\label{thm_main_1} The pair $(Z,\pi)$ is all-coherent.
\item\label{thm_main_2} The Baues poset $\omega(Z,\pi)$ is the face lattice of a polytope.
\item\label{thm_main_3} The order complex $\Delta(\omega(Z,\pi))$ is a simplicial sphere.
\item\label{thm_main_4} Every monotone path is coherent.
\item\label{thm_main_6} The acyclic oriented matroid $\Mcal$ associated to $(Z,\pi)$ is in the list given in Section~\ref{sec_classification}.
\end{enumerate}

\end{theorem}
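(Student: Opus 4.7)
The plan is to split the equivalences into a formal topological block establishing $(1) \Leftrightarrow (2) \Leftrightarrow (3)$, the trivial implication $(1) \Rightarrow (4)$, and the remaining arrows $(4) \Rightarrow (5) \Rightarrow (1)$, which will depend on the oriented matroid classification.

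To show $(1) \Rightarrow (2)$, we will invoke the fiber polytope construction reviewed in Section~\ref{sec_coherence_poly}: the coherent cellular strings always form the face lattice of the $(\dim Z - 1)$-dimensional fiber polytope $\Sigma(Z,\pi)$, so all-coherence is equivalent to this face lattice being the full Baues poset, giving also $(2) \Rightarrow (1)$. Then $(2) \Rightarrow (3)$ is standard, as the order complex of a polytope's face lattice is its barycentric subdivision and hence a sphere. For $(3) \Rightarrow (1)$, the coherent subposet of $\omega(Z,\pi)$ always has order complex equal to a $(\dim Z - 2)$-sphere, embedded as a subcomplex in $\Delta(\omega(Z,\pi))$. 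Assuming the ambient complex is also a $(\dim Z - 2)$-sphere, we have an inclusion of closed pseudomanifolds of the same dimension, and a standard propagation argument along shared codimension-one faces then forces the subcomplex to exhaust every top-dimensional simplex of the ambient sphere. The inclusion is thus an equality and every cellular string is coherent.

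The implication $(1) \Rightarrow (4)$ is immediate since monotone paths are cellular strings. To close the cycle we prove $(4) \Rightarrow (5) \Rightarrow (1)$ through the explicit classification. For $(5) \Rightarrow (1)$, we will verify, for each entry on the list in Section~\ref{sec_classification}, that every cellular string can be realized as the lower envelope of an explicit height function. Since the list depends only on the oriented matroid, this will also establish the oriented matroid invariance of all-coherence advertised in the abstract. For the contrapositive of $(4) \Rightarrow (5)$, we will show that every oriented matroid not on the list contains a small local obstruction producing two monotone paths related by a flip that cannot be realized coherently, so at least one of the two paths is non-coherent.

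The main obstacle is the classification step $(4) \Rightarrow (5)$: pinning down the correct finite collection of forbidden oriented matroid configurations and showing that their absence characterizes membership in the list. The strategy is to start with pairs of monotone paths related by a flip and, using the covector structure of the oriented matroid, detect when the flip is forced to be incoherent; the resulting obstruction set must then be shown to cut out precisely the complement of the list in Section~\ref{sec_classification}, most plausibly by a minor-closed reduction to a finite collection of small-rank cases.
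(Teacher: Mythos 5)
Your decomposition of the five equivalences is sensible, and the block $(1)\Leftrightarrow(2)\Leftrightarrow(3)$ largely matches the paper's treatment, though with one replacement: where you run a pseudomanifold propagation argument for $(3)\Rightarrow(1)$, the paper applies Borsuk--Ulam (via the invariance of domain) to rule out a proper embedding $\Sbb^d\hookrightarrow\Sbb^d$. Both work, but you are silently assuming the ambient complex $\Delta(\ov{\omega}(Z,\pi))$ is a sphere of the \emph{correct} dimension $\dim Z-2$. Statement~(3) only says it is \emph{a} simplicial sphere; to pin down the dimension you need the Billera--Kapranov--Sturmfels result that when $\dim Q=1$ this order complex is homotopy equivalent to $\Sbb^{\dim P-2}$. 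That citation is not optional, and its absence is a gap in your $(3)\Rightarrow(1)$ step.

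The more serious divergence is in your route from (4) to (5). The paper does not prove $(4)\Rightarrow(5)$ directly by hunting for incoherent flips. Instead it establishes $(4)\Rightarrow(1)$ via Lemma~\ref{lem_coh_galleries} and Corollary~\ref{cor_mono_path} --- a concrete interpolation argument showing that if every cellular string strictly refining $\Fbf$ is coherent, then so is $\Fbf$ --- and then proves $(1)\Leftrightarrow(5)$ separately. For the classification itself, the central obstruction is not a ``local flip obstruction'' but the gradedness criterion of Proposition~\ref{prop_graded}: if $(Z,\pi)$ is all-coherent then no proper cellular string $(F_1,\dots,F_l)$ can satisfy $\sum_i(\dim F_i-1)\ge r-1$, because such a string would produce a chain longer than the rank of the (putative) polytope's face lattice. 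The classification theorem in Section~\ref{sec_classification} then runs a geometric analysis of affine models under exactly this constraint, forcing first that $\conv(\Pcal)$ is a simplex and then that any two interior points are collinear with a vertex. Your proposal omits this tool entirely and instead gestures at ``a minor-closed reduction to a finite collection of small-rank cases.'' That would not work as stated: the allowed list contains infinite families $E_{r,m}$ and $\tilde E_{r,m}$ of unbounded rank, and the forbidden configurations are not a finite minor-closed family but the failure of the two geometric conditions above. Finally, for $(5)\Rightarrow(1)$ you set yourself the task of exhibiting a height function for every cellular string, but the paper is more economical: it verifies coherence only for monotone paths (Section~\ref{subsec_allcoh_family} for $E_{r,m}$, the cycle observation for $R_3$) and then invokes Corollary~\ref{cor_mono_path} once more to lift to all cellular strings.
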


The equivalence of (\ref{thm_main_1})-(\ref{thm_main_3}) is proved for all polytopes in Corollary~\ref{cor_all_coh}. Since the order complex $\Delta(\omega(Z,\pi))$ only depends on the oriented matroid corresponding to $(Z,\pi)$, we may take Statement (\ref{thm_main_3}) as the definition of the all-coherence property for an oriented matroid. We prove the equivalence of (\ref{thm_main_1}) and (\ref{thm_main_4}) in Corollary~\ref{cor_mono_path}. Our proof of this equivalence works for zonotopes only. Finally, we give a complete classification of zonotopes with the all-coherence property in Section~\ref{sec_classification} using the techniques developed in Section~\ref{sec_coherence_lemma}.

The all-coherence property was previously studied for projections $P\ra Q$ where $P$ is a cube and $Q$ is 2-dimensional \cite{edelman.reiner:1996free} or $P$ and $Q$ are cyclic polytopes \cite{athanasiadis.deloera.reiner.santos:2000fiber}. It was observed that in many cases of an all-coherent projection, there is a ``nice'' formula for the number of generic tilings. In Section~\ref{subsec_allcoh_family}, we demonstrate a family of zonotopes for which every cellular string is coherent. As in the preceding literature, the monotone paths of these zonotopes admit a nice enumeration.

This paper continues the work by the first author \cite{edman:2015diameter}, which contained several techniques to detect incoherent monotone paths and had a partial classification of all-coherent zonotopes. However, our proof of the full classification primarily relies on Proposition~\ref{prop_graded}, which is new to this paper.

\section{Universal coherence for polytopes}\label{sec_polytope}

In this section, we recall coherence and the Baues poset of polyhedral subdivisions induced by a projection of polytopes. In Section~\ref{sec_coherence_poly}, we prove that every element of the Baues poset is coherent exactly when its order complex is homeomorphic to a sphere of a certain dimension.

\subsection{Fiber polytope}\label{subsec_fiber}

Given a polytope $P$ and linear functional $\psi$, let $P^\psi$ be the face
$$P^\psi=\{x\in P:\ \psi(x)=\min_{y\in P}\psi(y)\}.$$

That is, $P^{\psi}$ is the face of $P$ at which $\psi$ is minimized. The \emph{normal fan} of $P\subseteq\Rbb^d$ is a subdivision of $(\Rbb^d)^*$ into relatively open cones where $\psi_1$ and $\psi_2$ lie in the same cone if $P^{\psi_1}=P^{\psi_2}$. Let $\pi:P\ra Q$ be a surjective (affine) linear map of polytopes. A collection $\Delta$ of faces of the boundary complex of $P$ is called a \emph{$\pi$-induced subdivision} of $Q$ if
\begin{itemize}
\item $\{\pi(F):\ F\in\Delta\}$ is a polyhedral complex subdividing $Q$ with all $\pi(F)$ distinct, and 
\item if $\pi(F)\subseteq\pi(F^{\pr})$ then $F=F^{\pr}\cap\pi^{-1}(\pi(F))$.
\end{itemize}

For any $\pi:P\ra Q$, there is always the \emph{trivial subdivision} $\Delta=\{P\}$. A $\pi$-induced subdivision $\Delta$ is \emph{coherent} if there exists a linear functional $\psi:P\ra\Rbb$ such that for $x\in Q$ and $F\in\Delta$ with $x\in\pi(F)$,
$$\pi^{-1}(x)^\psi=F\cap\pi^{-1}(x).$$
Any linear functional $\psi$ defines a unique coherent subdivision $\Delta^\psi$. For example, $\Delta^0$ is the trivial subdivision.

The \emph{Baues poset} $\omega(P,Q)$ is the set of $\pi$-induced subdivisions, partially ordered by refinement: $\Delta\leq\Delta^{\pr}$ if for any face $F$ in $\Delta$ there exists $F^{\pr}$ in $\Delta^{\pr}$ such that $F\subseteq F^{\pr}$. The subposet of coherent subdivisions is denoted $\omega_{\coh}(P,Q)$.

The \emph{fiber polytope} $\Sigma(P,Q)$ was defined by Billera and Sturmfels as the Minkowski integral $\frac{1}{\Vol(Q)}\int_Q\pi^{-1}(x)dx$; see \cite{billera.sturmfels:fiber} for details. This polytope lives in the ambient space of $P$ and is of dimension $\dim P-\dim Q$. The faces of the fiber polytope encode coherent subdivisions of $Q$ in the following sense. For linear functionals $\psi_1,\psi_2:P\ra\Rbb$, $\Delta^{\psi_1}=\Delta^{\psi_2}$ if and only if $\psi_1$ and $\psi_2$ lie in the relative interior of the same cone of the normal fan of $\Sigma(P,Q)$. For the most part, we will only need results about the normal fan of a fiber polytope rather than the polytope itself.

\subsection{All-coherence property}\label{sec_coherence_poly}

Given a poset $X$, the \emph{order complex} $\Delta(X)$ is the abstract simplicial complex whose faces are chains $x_0<\cdots<x_d$ of $X$. If $X$ has a unique maximum or minimum element, we let $\ov{X}$ be the same poset with these elements removed. In particular, $\ov{\omega}(P,Q)$ is the poset of \emph{nontrivial} subdivisions of $Q$ by faces of $P$.

We say that a projection $\pi:P\ra Q$ is \emph{all-coherent} if every $\pi$-induced subdivision of $Q$ is coherent. The following theorem gives a useful characterization of the all-coherence property.

\begin{theorem}\label{thm_all_coh}
Given a projection $\pi:P\ra Q$, the following statements are equivalent.
\begin{enumerate}
\item\label{thm_all_coh_1} The map $\pi$ is all-coherent.
\item\label{thm_all_coh_2} The Baues poset $\omega(P,Q)$ is isomorphic to the face lattice of a polytope of dimension $\dim P-\dim Q$.
\item\label{thm_all_coh_3} The order complex $\Delta(\ov{\omega}(P,Q))$ is a simplicial sphere of dimension $\dim P-\dim Q-1$.
\end{enumerate}
\end{theorem}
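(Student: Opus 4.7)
The plan is to establish the cycle of implications $(\ref{thm_all_coh_1}) \Rightarrow (\ref{thm_all_coh_2}) \Rightarrow (\ref{thm_all_coh_3}) \Rightarrow (\ref{thm_all_coh_1})$, using the Billera--Sturmfels fiber polytope as the bridge between the combinatorics of $\omega_{\coh}(P,Q)$ and polytopal geometry. Throughout, write $d = \dim P - \dim Q$.

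For $(\ref{thm_all_coh_1}) \Rightarrow (\ref{thm_all_coh_2})$, the all-coherence hypothesis is exactly the equality $\omega(P,Q) = \omega_{\coh}(P,Q)$, and the fiber polytope construction identifies the latter (together with a formal empty face) with the face lattice of $\Sigma(P,Q)$, which has dimension $d$. For $(\ref{thm_all_coh_2}) \Rightarrow (\ref{thm_all_coh_3})$, I would invoke the standard fact that stripping the top and bottom from the face lattice of a $d$-polytope yields a poset whose order complex is the barycentric subdivision of its boundary complex, hence a simplicial $(d-1)$-sphere.

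The substantive direction is $(\ref{thm_all_coh_3}) \Rightarrow (\ref{thm_all_coh_1})$, and this is where I expect the main obstacle. The poset inclusion $\omega_{\coh}(P,Q) \subseteq \omega(P,Q)$ respects the common maximum (the trivial subdivision) and (when extended by a formal minimum on both sides) the minimum, so it induces an inclusion of order complexes
\[
\Delta(\overline{\omega_{\coh}}(P,Q)) \hookrightarrow \Delta(\overline{\omega}(P,Q)).
\]
By the fiber polytope construction, the smaller complex is already a simplicial $(d-1)$-sphere, and by hypothesis so is the larger one. I would then invoke topological rigidity: a proper subcomplex of a simplicial $(d-1)$-sphere cannot itself be a $(d-1)$-sphere, since its geometric realization would, by invariance of domain applied to the embedding of $(d-1)$-manifolds, be both closed (by compactness) and open in the ambient sphere, contradicting connectedness when $d \geq 2$; the cases $d \leq 1$ are immediate by inspection. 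Any hypothetical non-coherent subdivision $\Delta_0 \in \omega(P,Q) \setminus \omega_{\coh}(P,Q)$ would contribute a vertex to the ambient complex that is missing from the coherent subcomplex, forcing the inclusion to be proper. Hence the two order complexes coincide, the underlying posets agree, and $\pi$ is all-coherent.
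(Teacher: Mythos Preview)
Your proposal is correct and follows essentially the same approach as the paper: the cycle $(\ref{thm_all_coh_1})\Rightarrow(\ref{thm_all_coh_2})\Rightarrow(\ref{thm_all_coh_3})\Rightarrow(\ref{thm_all_coh_1})$, with the fiber polytope supplying $(\ref{thm_all_coh_1})\Rightarrow(\ref{thm_all_coh_2})$, barycentric subdivision of the boundary supplying $(\ref{thm_all_coh_2})\Rightarrow(\ref{thm_all_coh_3})$, and the simplicial inclusion $\Delta(\overline{\omega}_{\coh})\hookrightarrow\Delta(\overline{\omega})$ of $(d-1)$-spheres forcing equality. The only cosmetic difference is the topological lemma invoked at the end: the paper argues that a non-surjective inclusion would embed $S^{d-1}$ into $\Rbb^{d-1}$ and cites Borsuk--Ulam, whereas you use invariance of domain to see the image is clopen and hence all of the connected target---both are standard and equally valid.
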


\begin{proof}
We prove (\ref{thm_all_coh_1})$\Ra$(\ref{thm_all_coh_2})$\Ra$(\ref{thm_all_coh_3})$\Ra$(\ref{thm_all_coh_1}).

If $P\ra Q$ is all-coherent, then $\omega(P,Q)=\omega_{\coh}(P,Q)$. The latter poset is isomorphic to the face lattice of the fiber polytope $\Sigma(P,Q)$, which is a polytope of dimension $\dim P-\dim Q$.

If $\omega(P,Q)$ is the face lattice of a polytope $R$, then its proper part $\ov{\omega}(P,Q)$ is the face poset of its boundary complex $\pa R$. The order complex $\Delta(\ov{\omega}(P,Q))$ is realized by the barycentric subdivision of $\pa R$, so it is a simplicial sphere. If $\dim R=\dim P-\dim Q$, then its boundary complex has dimension $\dim P-\dim Q-1$.

Now assume $\Delta(\ov{\omega}(P,Q))$ is a simplicial sphere of dimension $d=\dim P-\dim Q-1$. The inclusion $\omega_{\coh}(P,Q)\hookra\omega(P,Q)$ induces an injective simplicial map
$$\Delta(\ov{\omega}_{\coh}(P,Q))\hookra\Delta(\ov{\omega}(P,Q)).$$
The geometric realization of this map is a continuous map $\Sbb^d\ra\Sbb^d$ which is a homeomorphism onto its image. If it is not surjective, then we obtain a subspace of $\Rbb^d$ which is homeomorphic to $\Sbb^d$. This is well-known to be impossible, e.g. by an application of the Borsuk-Ulam Theorem; see \cite[Theorem 2.1.1]{matousek2008borsuk}. Hence, $\omega(P,Q)=\omega_{\coh}(P,Q)$ holds, which means that $P\ra Q$ is all-coherent.
\end{proof}

If $\dim Q=1$, we can drop the dimension condition. This proves the equivalence of Theorem~\ref{thm_main}(\ref{thm_main_1})-(\ref{thm_main_3}).

\begin{corollary}\label{cor_all_coh}
If $\dim Q=1$, then the following are equivalent.
\begin{enumerate}
\item\label{cor_all_coh_1} The map $\pi:P\ra Q$ is all-coherent.
\item\label{cor_all_coh_2} The Baues poset $\omega(P,Q)$ is the face lattice of a polytope.
\item\label{cor_all_coh_3} The order complex $\Delta(\ov{\omega}(P,Q))$ is a simplicial sphere.
\end{enumerate}
\end{corollary}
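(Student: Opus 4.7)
The plan is to bootstrap Corollary~\ref{cor_all_coh} directly off Theorem~\ref{thm_all_coh}. The only difference between the two statements is that here the polytope in (\ref{cor_all_coh_2}) is not required to have dimension $\dim P-\dim Q$ and the simplicial sphere in (\ref{cor_all_coh_3}) is not required to have dimension $\dim P-\dim Q-1$. The implications (\ref{cor_all_coh_1})$\Ra$(\ref{cor_all_coh_2})$\Ra$(\ref{cor_all_coh_3}) therefore come for free: the corresponding implications of Theorem~\ref{thm_all_coh} produce objects satisfying the stronger dimension constraints, which certainly also satisfy the weaker ones. So all the real work lies in closing the loop (\ref{cor_all_coh_3})$\Ra$(\ref{cor_all_coh_1}).

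To handle (\ref{cor_all_coh_3})$\Ra$(\ref{cor_all_coh_1}), I would argue that the unspecified dimension is in fact forced once $\dim Q=1$, and then quote Theorem~\ref{thm_all_coh} again. The key ingredient is the cellular-string case of the Billera-Kapranov-Sturmfels theorem \cite{billera.kapranov.sturmfels:cellular}: whenever $\dim Q=1$, the order complex $\Delta(\ov{\omega}(P,Q))$ is homotopy equivalent to $\Sbb^{\dim P-2}$. Granting hypothesis (\ref{cor_all_coh_3}), $\Delta(\ov{\omega}(P,Q))$ is also a simplicial sphere, and since the dimension of a simplicial sphere can be read off from its top nonvanishing reduced homology, the two pieces of information combine to give
$$\dim \Delta(\ov{\omega}(P,Q)) = \dim P - 2 = \dim P-\dim Q-1.$$
This is precisely the hypothesis of Theorem~\ref{thm_all_coh}(\ref{thm_all_coh_3}), so we may apply the theorem to conclude that $\pi$ is all-coherent.

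There is essentially no technical obstacle; the proof is a short piece of bookkeeping sitting on top of Theorem~\ref{thm_all_coh} and \cite{billera.kapranov.sturmfels:cellular}. The one point worth noting explicitly is that the condition $\dim Q=1$ cannot be dropped from the corollary for free: it is used precisely to invoke BKS and pin down the homotopy type (and hence the dimension) of $\Delta(\ov{\omega}(P,Q))$. For higher-dimensional base polytopes the analogous dimension-free equivalence is genuinely not known in the same generality, which is why this result is stated only in the $\dim Q=1$ setting relevant to Theorem~\ref{thm_main}.
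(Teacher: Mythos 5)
Your proof is correct and follows essentially the same route as the paper's: the forward implications fall out of Theorem~\ref{thm_all_coh} directly, and for (\ref{cor_all_coh_3})$\Ra$(\ref{cor_all_coh_1}) both you and the paper invoke the Billera--Kapranov--Sturmfels theorem to identify the homotopy type of $\Delta(\ov{\omega}(P,Q))$ as $\Sbb^{\dim P-2}$ and hence pin its dimension, reducing to Theorem~\ref{thm_all_coh}. Your aside about reading the dimension off the top reduced homology is just a slightly more explicit rendering of the paper's ``hence $\Delta(\ov{\omega}(P,Q))$ is homeomorphic to $\Sbb^{\dim P-2}$.''
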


\begin{proof}
The proof of (\ref{cor_all_coh_1})$\Ra$(\ref{cor_all_coh_2})$\Ra$(\ref{cor_all_coh_3}) is immediate. It remains to show (\ref{cor_all_coh_3}) implies (\ref{cor_all_coh_1}).

Suppose $\dim Q=1$ and $\Delta(\ov{\omega}(P,Q))$ is a simplicial sphere. Then \cite[Theorem 1.2]{billera.kapranov.sturmfels:cellular} states that $\Delta(\ov{\omega}(P,Q))$ is homotopy equivalent to a sphere of dimension $\dim P-2$. Hence, $\Delta(\ov{\omega}(P,Q))$ is homeomorphic to $\Sbb^{\dim P-2}$. By Theorem~\ref{thm_all_coh}, it follows that $\pi$ is all-coherent.
\end{proof}

\section{Oriented matroids}\label{sec_OM}

In this section, we develop some notation for oriented matroids. For more background on oriented matroids, we recommend \cite{bjorner.lasVergnas.ea:oriented}.

\subsection{Hyperplane arrangements}\label{subsec_hyparr}

The normal fan of a zonotope is the set of cones cut out by a real, central hyperplane arrangement. In this section, we recall some notation for hyperplane arrangements.

A \emph{real hyperplane arrangement} (or \emph{arrangement}) is a finite set of hyperplanes in a real vector space $V$. We will assume that the arrangement is \emph{central}, which means that every hyperplane contains the origin. Each hyperplane partitions $V$ into two open half-spaces and the hyperplane itself. We typically assume that an arrangement comes with an \emph{orientation}; that is, for each hyperplane $H$, we set $H^0=H$ and assign the label $H^+$ and $H^-$ to the two open half-spaces defined by $H$. A \emph{face} of an arrangement $\Acal$ is any nonempty cone of the form $\bigcap_{H\in\Acal}H^{x(H)}$ where $x\in\{0,+,-\}^{\Acal}$. Maximal faces are called \emph{chambers}. A \emph{sign vector} is an element of $\{0,+,-\}^E$ for some finite set $E$. A sign vector $x\in\{0,+,-\}^{\Acal}$ is a \emph{covector} of $\Acal$ if there is a face $F$ of $\Acal$ such that $F = \bigcap_{H\in\Acal}H^{x(H)}$. The set of covectors of $\Acal$ will be denoted $\Lcal(\Acal)$.

We say a zonotope $Z=Z(\mathbf{\rho})$ is generated by a set of vectors $\mathbf{\rho}=\{\rho_1,\ldots,\rho_n\}\subseteq V$ if it is (an affine translation of) the image of $[0,1]^n$ under the matrix with columns vectors $\rho_1,\ldots,\rho_n$. The normal fan of $Z$ is the hyperplane arrangement $\Acal=\{H_1,\ldots,H_n\}$ in $V^*$ where $H_i$ is the set of linear functionals annihilating $\rho_i$. In this situation, we say that $Z$ is dual to $\Acal$.

\begin{example}\label{example_zonogon}
Set $I=[0,1]$ and let $\pi:I^6\ra Z$ be a generic projection from the standard $6$-dimensional cube to a 2-dimensional zonotope $Z$ with $6$ zones. We may assume that $\pi$ takes the form
$$\left(\begin{matrix}1 & 1 & 1 & 1 & 1 & 1\\a_1 & a_2 & a_3 & a_4 & a_5 & a_6\end{matrix}\right)$$
for some $a_1<\cdots<a_6$. A linear map $\hat{\pi}:\Rbb^6\ra\Rbb^3$ that is a factor of $\pi$ takes the form
$$\left(\begin{matrix}1 & 1 & 1 & 1 & 1 & 1\\a_1 & a_2 & a_3 & a_4 & a_5 & a_6\\b_1 & b_2 & b_3 & b_4 & b_5 & b_6\end{matrix}\right)$$
for some $b_1,\ldots,b_6\in\Rbb$. Let $\hat{Z}$ be the image of $I^6$ under $\hat{\pi}$. For $1\leq i<j<k\leq 6$, let $\Delta_{ijk}^{\un{a}}$ denote the minor
$$a_ib_j-a_ib_k+a_jb_k-a_jb_i+a_kb_i-a_kb_j.$$
The sequence of signs $(\sgn\Delta_{ijk}^{\un{a}})_{1\leq i<j<k\leq 6}$ is the \emph{chirotope} of $\hat{Z}$, and it completely determines the facial structure of $\hat{Z}$. As a function of $\un{b}$, the minor $\Delta_{ijk}^{\un{a}}$ is a linear form on $\Rbb^6$. Hence, the normal fan of the fiber polytope associated to the projection $\pi$ is the set of cones cut out by the hyperplane arrangement
$$\Acal^{\un{a}}=\{\ker\Delta_{ijk}^{\un{a}}:\ 1\leq i<j<k\leq 6\},$$
which is known as a \emph{discriminantal arrangement} (cf. \cite{bayer.brandt:1997discriminantal}).

The Baues poset $\omega(I^6,Z)$ is the collection of subdivisions of $Z$ by zonogons, each of which is the Minkowski sum of several edges of $Z$. In particular, the maximally refined subdivisions of $Z$ are by rhombi. The coherent rhombic tilings correspond to the chambers of $\Acal^{\un{a}}$. There are a total of 904 distinct rhombic tilings of $Z$. However, there is no choice of vector $\un{a}$ for which all 904 tilings are coherent. A simple way to see this is that the (proper) subposet of $\omega(I^6,Z)$ of tilings refining the zonotopal subdivision in Figure~\ref{fig_zonotile} is a rank 4 Boolean lattice, but the subposet of coherent subdivisions is the face lattice of a 4-dimensional polytope. Moreover, the number of chambers of $\Acal^{\un{a}}$ may vary for different choices of $\un{a}$. For example, one can check that $\Acal^{(1,2,3,4,5,6)}$ has $888$ chambers, whereas $\Acal^{(1,2,3,4,5,7)}$ has $892$ chambers. Sturmfels \cite{sturmfels:1991letter} classified the combinatorial types of discriminantal arrangements of this form, showing that it has at least $876$ and at most $892$ chambers.

The all-coherence property for a projection of the form $\pi:I^n\ra Z,\ Z\subseteq\Rbb^2$ was completely classified by Edelman and Reiner \cite{edelman.reiner:1996free}. In that paper, the authors observed that the number of rhombus tilings of $Z$ admits a nice product formula when they are all-coherent, as given by MacMahon when $Z$ is a hexagon \cite{macmahon:2001combinatory} or Elnitsky when $Z$ is an octogon \cite{elnitsky:1997rhombic}. Interestingly, many (but not all) of the discriminantal arrangements are free when the projection $\pi$ has the all-coherence property. However, no direct connection between freeness and all-coherence is known. Bailey \cite{bailey:1999coherence} observed a similar connection between all-coherence of the zonotopal tilings of a $3$-dimensional zonotope $Z$ and freeness of the corresponding discriminantal arrangement.

\end{example}

\begin{figure}
\centering
\includegraphics[scale=1.2]{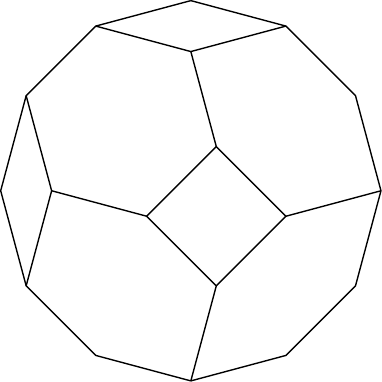}
\caption{\label{fig_zonotile}A zonogon tiling}
\end{figure}

\subsection{Cellular strings}\label{subsec_cellular_strings}

Fix an arrangement $\Acal$ of hyperplanes in $\Rbb^n$. Designate one of the chambers of $\Acal$ to be the \emph{fundamental chamber} $c_0$. Given two chambers $c$ and $c'$ of $\Acal$, let $S(c,c')$ denote the set of hyperplanes of $\Acal$ for which $c$ and $c'$ lie on opposite sides of the hyperplane. A \emph{gallery} is a sequence of chambers $c_0,\ldots,c_l$ such that $|S(c_{i-1},c_i)|=1$ for all $i$. This gallery may be drawn as a path in $\Rbb^n$ connecting generic points in each chamber. It is \emph{reduced} if $|S(c_0,c_l)|=l$, which means that it may be represented by a path that crosses each hyperplane at most once. For our purposes, a gallery is always assumed to be reduced and connecting a pair of antipodal chambers $c_0,\ -c_0$. Such a gallery defines a total order on the arrangement $\Acal$ where $H<H^{\pr}$ if $H$ is crossed before $H^{\pr}$. If $Z$ is a zonotope dual to $\Acal$, then a gallery on $\Acal$ corresponds to a monotone path on $Z$. For this correspondence, one chooses a linear functional $\pi$ that is minimized on $Z$ at $v_0$, the vertex whose normal cone is $c_0$.

The cellular strings of $\Acal$ may be defined in several equivalent ways, e.g. as sequences of faces, covectors (Section~\ref{subsec:covector}), or chambers. Given a fundamental chamber $c_0$, we orient each $H\in\Acal$ so that $c_0$ is on the negative side of $H$. We define a \emph{cellular string} as a sequence of faces $(F_1,\ldots,F_l)$ of $\Acal$ such that for each $H\in\Acal$, there exists $i\in[l]$ such that $F_i$ is supported by $H$, $F_j\subseteq H^-$ if $j<i$ and $F_j\subseteq H^+$ if $j>i$. In particular, a cellular string determines an ordered set partition $(\Acal_1,\ldots,\Acal_l)$ of the set $\Acal$.

In terms of a zonotope $Z$ with generic functional $\pi$, a cellular string is a sequence of faces $(F_1,\ldots,F_l)$ where $F_1$ contains the bottom vertex, $F_l$ contains the top vertex, and adjacent faces $F_i,F_{i+1}$ meet at a vertex $v_i$ such that $\pi(x)\leq\pi(v_i)\leq\pi(y)$ for $x\in F_i,\ y\in F_{i+1}$. We note that this definition of a cellular string agrees with that of a $\pi$-induced subdivision given in Section~\ref{subsec_fiber}.

\begin{lemma}\label{lem_coh_cell_str}
Given a real vector space $V$, let $\Acal$ be a set of hyperplanes in $V^*$ and let $\pi$ be a generic point in $V^*$. For $H\in\Acal$, let $v_H$ be a vector in $V$ such that $\langle v_H,H\rangle=0$ and $\pi(v_H)>0$, and let $Z$ be the zonotope generated by $\{v_H:\ H\in\Acal\}$. Then an ordered set partition $(\Acal_1,\ldots,\Acal_l)$ of $\Acal$ defines a $\pi$-coherent cellular string of $Z$ if and only if there exists $\psi\in V^*$ such that
$$\frac{\psi(v_H)}{\pi(v_H)}=\frac{\psi(v_{H^{\pr}})}{\pi(v_{H^{\pr}})}$$
for $H,H^{\pr}$ in the same block, and
$$\frac{\psi(v_H)}{\pi(v_H)}<\frac{\psi(v_{H^{\pr}})}{\pi(v_{H^{\pr}})}$$
if $H\in\Acal_i,\ H^{\pr}\in\Acal_j,\ i<j$.
\end{lemma}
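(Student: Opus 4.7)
The plan is to reinterpret the definition of a $\pi$-coherent subdivision from Section~\ref{subsec_fiber} as a family of one-parameter linear programs and to solve each by rearrangement. First I would parametrize $Z$ by the surjection $[0,1]^{\Acal} \to V$, $\lambda \mapsto \sum_{H \in \Acal} \lambda_H v_H$. Under this parametrization, the fiber of $\pi$ over $t \in \pi(Z)$ becomes $\{\lambda \in [0,1]^{\Acal} : \sum_H \lambda_H \pi(v_H) = t\}$, any $\psi \in V^*$ pulls back to the linear objective $\lambda \mapsto \sum_H \lambda_H \psi(v_H)$, and the face $F_i$ of $Z$ associated to the block $\Acal_i$ is (a translate of) the image of the cube face given by $\lambda_H = 1$ for $H \in \bigcup_{j<i}\Acal_j$, $\lambda_H = 0$ for $H \in \bigcup_{j>i}\Acal_j$, and $\lambda_H$ free in $[0,1]$ for $H \in \Acal_i$.

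With this setup, coherence of the cellular string with witness $\psi$ is equivalent to demanding that for each $t$ in the interior of $\pi(F_i)$ the linear program
$$\min_{\lambda \in [0,1]^{\Acal}} \sum_H \lambda_H \psi(v_H) \quad \text{subject to} \quad \sum_H \lambda_H \pi(v_H) = t$$
attains its minimum exactly on the face described above. Introducing a Lagrange multiplier $\gamma$ for the equality constraint makes the objective separable, and since each $\pi(v_H) > 0$, coordinate-wise minimization forces $\lambda_H = 1$ when $\psi(v_H)/\pi(v_H) < \gamma$, $\lambda_H = 0$ when $\psi(v_H)/\pi(v_H) > \gamma$, and $\lambda_H$ free when equality holds. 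Matching the greedy solution with the face description then forces the existence of thresholds $\gamma_1 < \cdots < \gamma_l$ such that $\psi(v_H)/\pi(v_H) = \gamma_i$ for every $H \in \Acal_i$; this is exactly the ratio condition of the lemma.

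For the converse I would argue by direct construction: given a $\psi$ satisfying the ratio condition, define $\gamma_i$ to be the common ratio on $\Acal_i$; for any $t \in \pi(F_i)$ the greedy assignment with threshold $\gamma_i$ realizes the LP minimum and has minimizer set exactly $F_i \cap \pi^{-1}(t)$, yielding coherence. I expect the main obstacle to be the routine but careful identification of the combinatorial face $F_i$ of $Z$ (as described in Section~\ref{subsec_cellular_strings} via the breakpoint vertices $w_i = \sum_{j \le i}\sum_{H \in \Acal_j} v_H$) with the LP face singled out in the first paragraph, together with the treatment of non-strict inequalities at the boundary values $t \in \pi(F_i) \cap \pi(F_{i+1})$ where the threshold jumps from $\gamma_i$ to $\gamma_{i+1}$. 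Genericity of $\pi$ ensures that distinct vertices of $Z$ have distinct $\pi$-heights, so the ordered partition $(\Acal_1, \ldots, \Acal_l)$ is unambiguously recovered from the cellular string and no coincidences disrupt the rearrangement argument.
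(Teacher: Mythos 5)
Your proposal is correct and is essentially the paper's argument reframed in the language of linear programming: the Lagrangian shift $\psi - \gamma\pi$ is precisely the auxiliary functional $\psi_i = \psi - \lambda_i\pi$ that the paper constructs in the backward direction, and the threshold structure you extract via complementary slackness is the same ratio comparison the paper makes by directly evaluating $\psi$ at points $p_i + \lambda v_H$ on a fixed fiber. The details you flag as ``the main obstacle'' (matching the combinatorial face $F_i$ with the LP face, and the behavior at the breakpoint vertices) are exactly where the paper spends its effort, and your outline would close them in the same way.
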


\begin{proof}
First assume that the ordered set partition $(\Acal_1,\ldots,\Acal_l)$ of $\Acal$ defines a $\pi$-coherent cellular string. Let $\Fbf=(F_1,\ldots,F_l)$ denote the sequence of faces of $Z$ and $(x_1,\ldots,x_l)$ the sequence of covectors associated to this cellular string. Since this string is $\pi$-coherent, there exists a linear functional $\psi$ such that $\Delta^{\psi}=\Fbf$.

For an index $i$, let $p_i=\sum_{j:\ j<i}\sum_{H\in\Acal_j}v_H$. The face $F_i$ is the Minkowski sum of the point $p_i$ with the image of $[0,1]^{|\Acal_i|}$ under a matrix with column set $\{v_H:\ H\in\Acal_i\}$.

Fix some $q\in\Rbb$ such that $q>\pi(p_i)$ and $q<\pi(p_i+v_H)$ for $H\in\Acal$. By the coherence assumption, the face $\pi^{-1}(q)^{\psi}$ is equal to $\pi^{-1}(q)\cap F_i$. Fix $H,H^{\pr}\in\Acal$, and set $v=v_H,\ v^{\pr}=v_{H^{\pr}}$. Pick $\lambda,\lambda^{\pr}\in[0,1]$ such that $\pi(p_i+\lambda v)=q=\pi(p_i+\lambda^{\pr}v^{\pr})$. Then $\lambda\pi(v)=\lambda^{\pr}\pi(v^{\pr})$.

Assume $H$ and $H^{\pr}$ are both in $\Acal_i$. Then $\psi(p_i+\lambda v)=\psi(v_i+\lambda^{\pr}v^{\pr})$, so $\lambda\psi(v)=\lambda^{\pr}\psi(v^{\pr})$. Consequently, $\frac{\psi(v)}{\pi(v)}=\frac{\psi(v^{\pr})}{\pi(v^{\pr})}$.

On the other hand, suppose $H\in\Acal_i$ and $H^{\pr}\in\Acal_j$ with $i<j$. Then $p_i+\lambda^{\pr}v^{\pr}$ is in $Z$ but not in $F_i$. Hence, $\psi(p_i+\lambda v)<\psi(p_i+\lambda^{\pr}v^{\pr})$ holds, which implies $\frac{\psi(v)}{\pi(v)}<\frac{\psi(v^{\pr})}{\pi(v^{\pr})}$.\\

Now let $(\Acal_1,\ldots,\Acal_l)$ be an ordered set partition of $\Acal$ and assume that there exists $\psi\in V^*$ such that
$$\frac{\psi(v_H)}{\pi(v_H)}=\frac{\psi(v_{H^{\pr}})}{\pi(v_{H^{\pr}})}$$
for $H,H^{\pr}$ in the same block, and
$$\frac{\psi(v_H)}{\pi(v_H)}<\frac{\psi(v_{H^{\pr}})}{\pi(v_{H^{\pr}})}$$
if $H\in\Acal_i,\ H^{\pr}\in\Acal_j,\ i<j$.

For each $i$, let $\lambda_i$ denote the ratio $\frac{\psi(v_H)}{\pi(v_H)}$ for some $H\in\Acal_i$. Let $p_i=\sum_{j:\ j<i}\sum_{H\in\Acal_j}v_H$. Let $F_i$ be the zonotope generated by $\{v_H:\ H\in\Acal_i\}$, translated by $p_i$.

We claim that $F_i$ is a face of $Z$, namely that $F_i=Z^{\psi_i}$ where $\psi_i=\psi-\lambda_i\pi$. For $H\in\Acal$,

$$\psi_i(v_H)=\psi(v_H)-\lambda_i\pi(v_H)\begin{cases}=0\ \mbox{if }H\in\Acal_i\\<0\ \mbox{if }H\in\Acal_j,\ j<i\\>0\ \mbox{if }H\in\Acal_j,\ j>i\end{cases}.$$

Using the case $H\in\Acal_i$, we see that $\psi_i$ takes a common value $\psi_i(p_i)$ on the zonotope $F_i$. The zonotope $Z$ is the set of points
$$Z=\{\sum_{H\in\Acal}\mu_H v_H:\ (\mu_H)\in[0,1]^\Acal\}.$$

For $p\in Z$, there exists a representation $p-p_i=\sum_{H\in\Acal}\mu_H v_H$ where $\mu_H\leq 0$ for $H\in\Acal_j,\ j<i$ and $\mu_H\geq 0$ for $H\in\Acal_j,\ j>i$. By the previous calculation, we have $\psi_i(p-p_i)\geq 0$. Hence, $\psi_i$ is minimized on $Z$ at $F_i$.

The image $\pi(F_i)$ is the interval $[\pi(p_i),\pi(p_{i+1})]$. Hence, the images of the faces $F_i$ partition the image of $Z$ into intervals. For $q\in\pi(F_i)$, $\psi_i$ is minimized on $\pi^{-1}(q)$ at $F_i\cap\pi^{-1}(q)$. But $\psi_i=\psi-\lambda_i\pi$ and $\pi$ is constant on $\pi^{-1}(q)$, so $\psi$ is minimized on $\pi^{-1}(q)$ at $F_i\cap\pi^{-1}(q)$ as well. Therefore, $(F_1,\ldots,F_l)$ is a $\pi$-coherent cellular string induced by $\psi$.

\end{proof}

Since the set of cellular strings of $\Acal$ starting from $c_0$ does not depend on the choice of $\pi\in c_0$, we may define the pair $(\Acal,c_0)$ to be \emph{all-coherent} if there exists some $\pi$ in the interior of $c_0$ such that $\pi:Z\ra\Rbb^1$ is all-coherent. If $\pi$ is replaced by some other $\pi^{\pr}$ in $c_0$, it would still be all-coherent by Corollary~\ref{cor_all_coh}.

Given a gallery $\gamma$ in $\Acal$ and an intersection subspace $X$, there is an induced gallery $\gamma_X$ on the localized arrangement $\Acal_X$ where $\gamma_X$ crosses elements of $\Acal_X$ in the same order as $\gamma$. If $X\in L_2$, then there are two possibilities for $\gamma_X$. If $\gamma$ and $\gamma^{\pr}$ are galleries, the \emph{$L_2$-separation set} $L_2(\gamma,\gamma^{\pr})$ is the set of $X\in L_2$ such that $\gamma_X\neq\gamma^{\pr}_X$. The set of galleries forms a graph such that $\gamma$ and $\gamma^{\pr}$ are adjacent if $|L_2(\gamma,\gamma^{\pr})|=1$.

\begin{remark}
The graph of galleries is known to be connected \cite{cordovil.moreira:1993homotopy}. More recently, the diameter of this graph was investigated by Reiner and Roichman \cite{reiner.roichman:2013diameter}. In their paper, they observe that the graph-theoretic distance between two galleries $\gamma$ and $\gamma^{\pr}$ is bounded below by $|L_2(\gamma,\gamma^{\pr})|$. On the other hand, if $(\Acal,c_0)$ is all-coherent, then there exists a path between $\gamma$ and $\gamma^{\pr}$ of length equal to $|L_2(\gamma,\gamma^{\pr})|$. We conjecture that the converse statement holds: If the distance between any reduced galleries $\gamma$ and $\gamma^{\pr}$ is given by $|L_2(\gamma,\gamma^{\pr})|$, then $(\Acal,c_0)$ is all-coherent.
\end{remark}

\subsection{An all-coherent family}\label{subsec_allcoh_family}

In this section, we consider an explicit 2-parameter family of zonotopes $Z$ with linear functional $\pi$ such that $(Z,\pi)$ is all-coherent. We present this example in detail as it will be useful for the classification in Section~\ref{sec_classification}.

Fix positive integers $r,m$. Let $e_1,\ldots,e_r$ be the elementary basis vectors in $\Rbb^r$, and define vectors $a_1,\ldots,a_m$ where $a_i=ie_r+\sum_{j=1}^re_j$. Let $Z$ be the zonotope generated by $E=\{a_1,\ldots,a_m,e_1,\ldots,e_r\}$. Let $\pi:\Rbb^r\ra\Rbb$ be the sum of coordinates function. In particular, $\pi(e_i)=1$ and $\pi(a_j)=j+r$.

We claim that $(Z,\pi)$ is all-coherent. To prove this, we first determine some necessary conditions for a permutation of $E$ to define a monotone path. Then we show that each such permutation may be realized as a coherent monotone path by a suitable linear functional. Finally, we invoke Lemma~\ref{lem_coh_galleries} to complete the proof.

Let $p$ be a monotone path on $Z$, and let $\prec$ be the induced total order on $E$. For $1\leq k<l\leq m$, we have a relation $(l-k)e_r-a_l+a_k=0$, so either $e_r\prec a_l\prec a_k$ or $a_k\prec a_l\prec e_r$ holds. Combining these relations, we have either $e_r\prec a_m\prec \cdots\prec a_1$ or $a_1\prec \cdots\prec a_m\prec e_r$. By the relation $a_i=ie_r+\sum_{j=1}^re_j$, the maximum and minimum elements under $\prec$ must be elementary basis vectors.

Now assume $\prec$ is a total ordering on $E$ whose maximum and minimum elements are elementary basis vectors, and either $e_r\prec a_m\prec \cdots\prec a_1$ or $a_1\prec \cdots\prec a_m\prec e_r$ holds. We construct a linear functional $\psi$ realizing $\prec$ as a coherent monotone path. To do so, we distinguish three cases:
\begin{enumerate}
\item\label{ex_coh_case1} $e_r$ is maximal with respect to $\prec$,
\item\label{ex_coh_case2} $e_r$ is minimal with repsect to $\prec$, or
\item\label{ex_coh_case3} $e_r$ is neither minimal nor maximal.
\end{enumerate}

We let $\psi_j=\psi(e_j)$ and $\alpha=\sum_{j=1}^r\psi_j$. We observe that $\psi(e_j)/\pi(e_j)=\psi_j$ and $\psi(a_i)/\pi(a_i)=(\alpha+i\psi_r)/(i+r)$. We assume throughout that $\psi$ is chosen so that if $e_i\prec e_j$ then $\psi_i<\psi_j$. We impose additional constraints on $\psi$ separately for the three cases above.

Suppose we are in Case~\ref{ex_coh_case1}, and let $e_k$ be the minimum element under $\prec$. Set $\psi_r=1$, and for $j\in[r-1]\setm\{k\}$, let $\psi_j$ be a positive number such that $0<\psi_j<1$ and
\begin{itemize}
\item if $a_i\prec e_j$ then $\frac{i}{i+r}<\psi_j$, and
\item if $e_j\prec a_i$ then $\psi_j<\frac{i}{i+r}$.
\end{itemize}

Finally, choose $\psi_k<0$ such that $\alpha=0$. Then $\psi(a_i)/\pi(a_i)=\frac{i}{i+r}$. By the preceding conditions, we conclude that $\prec$ is realized as a coherent monotone path by $\psi$.

Case~\ref{ex_coh_case2} follows by similar reasoning as Case~\ref{ex_coh_case1}. Assume we are in Case~\ref{ex_coh_case3}, and let $e_k$ and $e_l$ be the minimum and maximum elements of $E$ under $\prec$, respectively. Furthermore, assume that $a_1\prec\cdots\prec a_m\prec e_r$ holds. Set $\psi_r=0$, and for $j\in[r-1]\setm\{k,l\}$, let $\psi_j$ be a real number such that
\begin{itemize}
\item if $a_i\prec e_j$ then $\frac{-1}{i+r}<\psi_j$, and
\item if $e_j\prec a_i$ then $\psi_j<\frac{-1}{i+r}$.
\end{itemize}

Finally, choose $\psi_k<0,\ \psi_l>0$ such that $\psi_k<\psi_i<\psi_l$ for all $i$ and $\alpha=-1$. Then $\psi(a_i)/\pi(a_i)=\frac{-1}{i+r}$, and we again conclude that $\prec$ is realized as a coherent monotone path by $\psi$. A similar argument exists when $e_r\prec a_m\prec\cdots\prec a_1$ holds. In that case, we fix $\alpha=1$ and choose $\psi$ accordingly.

We have now proven that every monotone path is coherent. By Lemma~\ref{lem_coh_galleries}, this implies that every cellular string is coherent.

In our proof of all-coherence of $(Z,\pi)$, we identified the monotone paths on $Z$ with total orderings of $E$ such that

\begin{enumerate}\label{enum_mps}
\item\label{enum_mps_1} the maximum and minimum elements of $E$ are equal to $e_k,e_l$ for some $k,l$, and
\item\label{enum_mps_2} either $e_r\prec a_m\prec\cdots\prec a_1$ or $a_1\prec\cdots\prec a_m\prec e_r$ holds.
\end{enumerate}

Using this characterization of monotone paths, we may $q$-count them as follows. Let $\gamma_0$ be the monotone path corresponding to the sequence $(e_1,\ldots,e_{r-1},a_1,\ldots,a_m,e_r)$. Then
$$\sum_{\gamma}q^{|L_2(\gamma_0,\gamma)|}=(1+q^{m+2})[r-1]!_q\left[\begin{matrix}m+r-1\\m+1\end{matrix}\right]_q.$$

We use condition~\ref{enum_mps_2} to distinguish two cases for a monotone path $\gamma$. If $a_1\prec\cdots\prec a_m\prec e_r$, then $\gamma$ may be constructed by taking an arbitrary permutation $\tau$ of $[r-1]$ and shuffling $(e_{\tau(2)},\ldots,e_{\tau(r-1)})$ with $(a_1,\ldots,a_m,e_r)$. We may identify this shuffle with a partition $\lambda$ in a $(r-2)\times(m+1)$ box. Then $|L_2(\gamma_0,\gamma)|$ is equal to the number of inversions of $\tau$ plus the size of $\lambda$. Hence, these monotone paths contribute
$$[r-1]!_q\left[\begin{matrix}m+r-1\\m+1\end{matrix}\right]_q$$
to the $q$-count. On the other hand, if $e_r\prec a_m\prec\cdots\prec a_1$ holds, then $\gamma$ may be constructed in the same way except that $(e_{\tau(1)},\ldots,e_{\tau(r-2)})$ is shuffled with $(e_r,a_m,\ldots,a_1)$. Flipping $(a_1,\ldots,a_m,e_r)$ and pushing $e_{\tau(r-1)}$ to the end adds a factor of $q^{m+2}$ to the previous $q$-count for these paths.

\begin{remark}
  In Example~\ref{example_zonogon}, we noted the large overlap between zonogons whose rhombic tilings are all coherent and whose discriminantal arrangements are free, as observed in \cite{edelman.reiner:1996free}. In contrast, the arrangements dual to the fiber zonotopes in the $2$-parameter family of zonotopes described in this section are not free in general. This lack of freeness is already present when $r=4$ and $m=1$. Indeed, its characteristic polynomial $(x-1)x(x^2 - 9x + 26)$ does not factor into linear polynomials over $\Zbb$.
\end{remark}

\subsection{Covector axioms}\label{subsec:covector}

Fix a finite set $E$. Recall a \emph{sign vector} is an element of $\{+,-,0\}^E$. Sign vectors form a monoid under composition, where
$$(x\circ y)(e)=\begin{cases}x(e) &\mbox{ if } x(e)\neq 0\\y(e) &\mbox{ otherwise}\end{cases},$$
for $x,y\in\{+,-,0\}^E$. We note that the identity element of the monoid is the 0-vector. The set $\{+,-,0\}$ is partially ordered where $0<+,\ 0<-$, and $+$ and $-$ are incomparable. This ordering extends to $\{+,-,0\}^E$ by the product ordering. The \emph{negation} $-x$ is the vector whose signs are reversed from $x$. Given sign vectors $x,y$, the \emph{separation set} $S(x,y)$ is the set of elements $e\in E$ such that $x(e)=-y(e)$ and $x(e)\neq 0$. The \emph{support} of a sign vector $x$ is the set $\supp(x)=\{e\in E:\ x(e)\neq 0\}$.

A submonoid $\Lcal$ of sign vectors is said to be the set of \emph{covectors of an oriented matroid} if it is closed under negation and
\begin{itemize}
\item\label{axiom} for $x,y\in\Lcal,\ e\in S(x,y)$, there exists $z\in\Lcal$ such that $z(e)=0$ and $z(f)=(x\circ y)(f)=(y\circ x)(f)$ for $f\in E\setm S(x,y)$. (\emph{elimination})
\end{itemize}

As an example, the set of covectors of a hyperplane arrangement defined in \S\ref{subsec_hyparr} satisfies these properties. An oriented matroid that comes from a hyperplane arrangement is said to be \emph{realizable}. While not every oriented matroid is realizable, a theorem of Folkman and Lawrence states that oriented matroids are ``close'' to being realizable \cite{folkman.lawrence:oriented}. For our purposes, we use the following consequence: the proper part of the poset of covectors of any oriented matroid is isomorphic to the poset of faces of a regular CW-sphere.

The set of covectors completely determines the rest of the data associated with an oriented matroid. Hence, we say that two oriented matroids are equal if they have the same set of covectors. For a discussion of other axiomatizations of oriented matroids, we refer to \cite[Chapter 3]{bjorner.lasVergnas.ea:oriented}.

Cellular strings may be defined at the level of oriented matroids as they were for zonotopes in Section~\ref{subsec_cellular_strings}. If $(\Mcal,E)$ is an acyclic oriented matroid with $c_0=-^E$ being the all-negative covector, then a cellular string of $\Mcal$ is a sequence of covectors $(x_1,\ldots,x_l)$ such that $x_1\circ c_0=c_0,\ x_l\circ(-c_0)=-c_0$ and $x_i\circ(-c_0)=x_{i+1}\circ c_0$ for all $i$. We let $\omega(\Mcal)$ denote the set of cellular strings of $\Mcal$.

The main difference between cellular strings of zonotopes and oriented matroids is that we do not know of a notion of ``coherence'' for oriented matroids. In particular, given two realizations $(Z,\pi),(Z^{\pr},\pi^{\pr})$ of an acyclic oriented matroid, a cellular string may be coherent for $(Z,\pi)$ but incoherent for $(Z^{\pr},\pi^{\pr})$. However, using Theorem~\ref{thm_all_coh}, we can say $\Mcal$ is \emph{all-coherent} if $\omega(\Mcal)$ is homeomorphic to a sphere.

\section{All-coherence property for zonotopes}\label{sec_coherence_lemma}

\subsection{Coherence of monotone paths}\label{subsec_lem_galleries}

The main result in this section is Lemma~\ref{lem_coh_galleries}, which says that if all monotone paths of a zonotope are coherent, then so is every cellular string. This lemma appeared previously as \cite[Lemma 4.16]{edman:2015diameter}. 

For this section, we fix a zonotope $Z=Z(A)$ for some matrix $A$ and generic linear functional $\pi$. We let $E$ denote the set of column vectors of $A$. We will assume that $E$ contains no zero vectors nor any parallel or antiparallel pairs of vectors. Furthermore, we scale the elements of $E$ so that $\pi(e)=1$ for each $e\in E$. This results in no loss of generality as the poset of cellular strings is invariant under these changes.

\begin{lemma}[\cite{edman:2015diameter}]\label{lem_coh_galleries}
Let $\Fbf$ be a cellular string. If every cellular string properly refining $\Fbf$ is coherent, then $\Fbf$ is coherent.
\end{lemma}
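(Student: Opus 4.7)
The plan is to apply the hypothesis to two proper refinements of $\Fbf$ whose witness functionals can be combined linearly to collapse one block, and then verify the resulting functional satisfies the coherence criterion for $\Fbf$ from Lemma~\ref{lem_coh_cell_str}.

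Write $\Fbf=(F_1,\dots,F_l)$ with associated ordered partition $(\Acal_1,\dots,\Acal_l)$. Since the hypothesis is vacuous for monotone paths (so the lemma must be combined with a base case), assume some block $\Acal_i$ has $|\Acal_i|\geq 2$. Recalling the normalization $\pi(v_H)=1$, proving coherence of $\Fbf$ via Lemma~\ref{lem_coh_cell_str} reduces to producing $\tilde\psi\in V^*$ that is constant on each $\Acal_j$ with strictly increasing constants as $j$ grows.

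First I would pick $H\in\Acal_i$ with $v_H$ a vertex of the convex hull of $\{v_{H'}:H'\in\Acal_i\}$; such an $H$ exists as any nonempty finite set has extreme points. The vertex condition produces a linear functional on $V$ strictly separating $v_H$ from the other generators in $\Acal_i$, so the splits $(\{H\},\Acal_i\setminus\{H\})$ and $(\Acal_i\setminus\{H\},\{H\})$ both realize $v_H$ as an edge of the sub-zonotope $F_i$ at its bottom and top vertices respectively. This yields two genuine proper refinements $\Fbf^-,\Fbf^+$ of $\Fbf$, and by hypothesis each is coherent, so Lemma~\ref{lem_coh_cell_str} supplies witnesses $\psi^-,\psi^+\in V^*$. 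Each $\psi^{\pm}$ is constant on every $\Acal_j$ with $j\neq i$, and on $\Acal_i$ takes values $a^{\pm}$ on $\Acal_i\setminus\{H\}$ and $b^{\pm}$ on $\{H\}$ with $a^+<b^+$ and $b^-<a^-$.

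Next, I would set $\tilde\psi=t\psi^++(1-t)\psi^-$. The convex combination is automatically constant on each $\Acal_j$ with $j\neq i$ since both $\psi^{\pm}$ are, so it remains to force it to be constant on $\Acal_i$ as well. The condition $ta^++(1-t)a^-=tb^++(1-t)b^-$ rearranges to $t/(1-t)=(b^--a^-)/(a^+-b^+)$, a ratio of two negative quantities, hence positive; this pins down a unique $t\in(0,1)$. With this $t$, $\tilde\psi$ is constant on $\Acal_i$ as well. Monotonicity of the block-constants of $\tilde\psi$ follows since they are convex combinations of the two strictly increasing sequences of block-constants of $\psi^{\pm}$ (and the common value on $\Acal_i$ strictly exceeds those below it and is strictly exceeded by those above, using both witnesses' strict inequalities). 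Hence $\tilde\psi$ witnesses coherence of $\Fbf$.

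The main obstacle is the first step: the splits $(\{H\},\Acal_i\setminus\{H\})$ and its reverse must be genuine cellular strings, not arbitrary ordered partitions. This fails in general, for instance if $v_H$ lies in the interior of the convex hull of $\{v_{H'}:H'\in\Acal_i\}$ (as for $v_3=\tfrac{1}{2}(v_1+v_2)$ in a scaled hexagon). The extreme-point choice of $v_H$ is precisely what makes both refinements available simultaneously; after that, the choice of $t$ is forced and the remaining inequalities are automatic.
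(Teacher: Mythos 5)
Your overall strategy—take two "opposite" proper refinements of $\Fbf$ that restore the block $\Acal_i$ in reverse order, get witness functionals from the hypothesis, and combine them into a single functional constant on $\Acal_i$—is the same strategy as the paper's. But your specific choice of refinements has a real gap.

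You pick $H\in\Acal_i$ with $v_H$ a vertex of the convex hull of $\{v_{H'}:H'\in\Acal_i\}$ and assert that $(\{H\},\Acal_i\setminus\{H\})$ and $(\Acal_i\setminus\{H\},\{H\})$ are cellular strings refining $\Fbf$. The extreme-point condition guarantees that the edge $[p_i,p_i+v_H]$ is a face of the cell $F_i$, but it does \emph{not} guarantee that the remaining piece, the sub-zonotope generated by $\Acal_i\setminus\{H\}$ translated by $v_H$, is a face of $F_i$. For that you would additionally need $v_H\notin\spans\{v_{H'}:H'\in\Acal_i\setminus\{H\}\}$, i.e.\ the covector supported on $\Acal_i\setminus\{H\}$ must exist. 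This fails already for a rank-2 block with three or more generators: with $v_1=(1,0)$, $v_2=(1,1)$, $v_3=(1,2)$, the points $v_1$ and $v_3$ are extreme, yet neither $(\{v_1\},\{v_2,v_3\})$ nor $(\{v_3\},\{v_1,v_2\})$ subdivides the hexagonal cell into faces, since $\{v_2,v_3\}$ already spans the plane. So the objects $\Fbf^-,\Fbf^+$ you apply the hypothesis to may not lie in the Baues poset at all, and the argument collapses.

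The paper sidesteps this by not prescribing the shape of the refinement: it takes $\Fbf^{(1)}$ to be \emph{any} maximal proper refinement of $\Fbf$ in the cellular-string poset (such a thing exists whenever $\Fbf$ is not already finest, i.e.\ whenever the hypothesis has content), which necessarily breaks a unique block $E_k$ into sub-faces $(E_1',\dots,E_m')$; the reverse partition $(E_m',\dots,E_1')$ is again a cellular string by central symmetry of the zonotope $F_k$. Because $m$ need not equal $2$, the paper's convex-combination step is also slightly different from yours: rather than solving a single equation to equalize exactly two values, it takes the \emph{smallest} $t$ at which some adjacent pair $\psi_{E_j'},\psi_{E_{j+1}'}$ collide, obtaining a string strictly between $\Fbf^{(1)}$ and $\Fbf$, and then invokes maximality of $\Fbf^{(1)}$ to conclude this string equals $\Fbf$. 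Your exact-$t$ computation and the monotonicity check would be fine once a valid pair of refinements is produced, but the existence of that pair is the missing ingredient.
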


\begin{proof}

Assume that every cellular string properly refining $\Fbf$ is coherent. Let $(E_1,\ldots,E_l)$ be the ordered set partition of $E$ induced by $\Fbf$. Let $\Fbf^{(1)}$ be a maximal proper cellular string of $\Fbf$. Then $\Fbf^{(1)}$ corresponds to the ordered set partition
$$(E_1,\ldots,E_{k-1},E_1^{\pr},\ldots,E_m^{\pr},E_{k+1},\ldots,E_l)$$
which differs from $\Fbf$ by breaking up a unique block $E_k$. Let $F$ be the element of $\Fbf$ generated by $E_k$. Then $F$ is a zonotope containing a cellular string generated by $(E_1^{\pr},\ldots,E_m^{\pr})$. Consequently, $F$ admits an opposite cellular string generated by $(E_m^{\pr},\ldots,E_1^{\pr})$. Hence, there is a cellular string $\Fbf^{(2)}$ of $Z$ generated by
$$(E_1,\ldots,E_{k-1},E_m^{\pr},\ldots,E_1^{\pr},E_{k+1},\ldots,E_l).$$

By assumption, both $\Fbf^{(1)}$ and $\Fbf^{(2)}$ are coherent cellular strings. Hence, there exist linear functionals $\psi^{(1)},\psi^{(2)}$ such that $\Fbf^{(i)}=\Delta^{\psi^{(i)}}$ for $i=1,2$. By Lemma~\ref{lem_coh_cell_str} and the normalization assumption ($\pi(e)=1$ for all $e\in E$), there is a unique value $\psi^{(i)}_{E_j}=\psi^{(i)}(e)$ for $e\in E_j$. We may similarly define $\psi^{(i)}_{E_j^{\pr}}$ for blocks $E_j^{\pr}$.

Let $t$ be the smallest number in the interval $[0,1]$ such that there exists $j\in\{1,\ldots,m-1\}$ with

$$(1-t)\psi^{(1)}_{E_j^{\pr}}+t\psi^{(2)}_{E_j^{\pr}}=(1-t)\psi^{(1)}_{E_{j+1}^{\pr}}+t\psi^{(2)}_{E_{j+1}^{\pr}}.$$

Let $\psi=(1-t)\psi^{(1)}+t\psi^{(2)}$. It is clear that $\psi$ is constant on each block of $\Fbf^{(1)}$. As before, we let $\psi_{E_j}=\psi(e)$ for $e\in E_j$ and $\psi_{E_j^{\pr}}=\psi(e)$ for $e\in E_j^{\pr}$. Then for $e\in E_k$, we have

$$\psi_{E_1}<\cdots<\psi_{E_{k-1}}<\psi_{E_1^{\pr}}\leq\cdots\leq\psi_{E_m^{\pr}}<\psi_{E_{k+1}}<\cdots<\psi_{E_l}$$

and $\psi_{E_j^{\pr}}=\psi_{E_{j+1}^{\pr}}$. Hence $\Delta^{\psi}$ is a cellular string that refines $\Fbf$ and is properly refined by $\Fbf^{(1)}$. By the maximality of $\Fbf^{(1)}$, we deduce $\Fbf=\Delta^{\psi}$, as desired.

\end{proof}

Letting $\Fbf$ be the trivial cellular string, we deduce the following corollary.

\begin{corollary}\label{cor_mono_path}
If every monotone path is coherent, then $(Z,\pi)$ is all-coherent.
\end{corollary}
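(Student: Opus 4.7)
The plan is a straightforward upward induction on the refinement poset $\omega(Z,\pi)$, using Lemma~\ref{lem_coh_galleries} as the inductive step. Under the refinement order, a monotone path is a minimal (most refined) element and the trivial cellular string $\{Z\}$ is the maximum, so ``properly refining $\Fbf$'' means strictly below $\Fbf$ in this poset. Since $\omega(Z,\pi)$ is finite, induction from the bottom up is well-defined, and there are only two things to verify: a base case (monotone paths) and the inductive step for any other $\Fbf$.

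For the base case, monotone paths are coherent by the hypothesis of the corollary; Lemma~\ref{lem_coh_galleries} itself need not be invoked here, since a monotone path has no proper refinement. For the inductive step, fix a cellular string $\Fbf$ that is not a monotone path, and assume by induction that every cellular string properly refining $\Fbf$ is coherent. Then the hypothesis of Lemma~\ref{lem_coh_galleries} is satisfied, so $\Fbf$ is coherent. Iterating upward, every element of $\omega(Z,\pi)$ is coherent. This is exactly the all-coherence property, and it also unpacks the parenthetical remark ``letting $\Fbf$ be the trivial cellular string'': at the final stage the hypothesis of the lemma at $\{Z\}$ has become ``every non-trivial cellular string is coherent'', which is precisely what the induction has just established.

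The argument has no real obstacle to speak of. All the geometric content---constructing the linear functionals $\psi^{(1)}$ and $\psi^{(2)}$ realizing the two maximal refinements of $\Fbf$, interpolating between them, and extracting a functional $\psi$ that realizes $\Fbf$ itself---has already been handled inside Lemma~\ref{lem_coh_galleries}. The corollary simply packages the consequence of repeatedly applying that lemma up the refinement poset.
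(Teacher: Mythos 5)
Your proof is correct and is essentially the argument the paper has in mind: the paper's terse remark ``Letting $\Fbf$ be the trivial cellular string'' elides exactly the bottom-up induction over the refinement poset that you spell out, with each inductive step supplied by Lemma~\ref{lem_coh_galleries}. You are also right to note the base case separately, since the lemma's proof (which begins by picking a maximal proper refinement of $\Fbf$) presupposes that $\Fbf$ is not already a monotone path, so for the minimal elements the hypothesis of the corollary, rather than the lemma, is what is needed.
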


\begin{remark}

The analogue of Lemma~\ref{lem_coh_galleries} for general polytope projections fails to hold. For example, there exists a point configuration with two coherent triangulations connected by an incoherent bistellar flip \cite[Example 5.3.4]{deLoera.rambau.santos:2010:TSA:1952022}.

\end{remark}

\begin{figure}

  \centering  
  \includegraphics{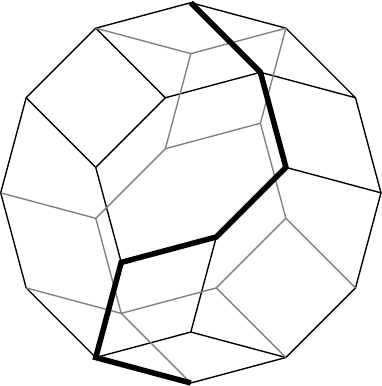}
  \caption{\label{fig_exceptional}A zonotope with the all-coherence property}
  
\end{figure}

\begin{example}
  Consider the zonotope in Figure~\ref{fig_exceptional} with a linear functional $\pi$ such that the bolded path is monotone. This path can be flipped to the left through the middle hexagon face or to the right through one of the rhombic faces. Indeed, one can check that every monotone path is adjacent to exactly two monotone paths. Hence, the graph of monotone paths is a cycle. Since the subgraph of coherent paths is also a cycle, we conclude that every monotone path is coherent. Corollary~\ref{cor_mono_path} then implies that the zonotope is all-coherent.
\end{example}

\subsection{Gradedness}

The face lattice of a polytope is graded by dimension. Thus, if $(\Acal,c_0)$ is all-coherent, then $\omega(\Acal,c_0)$ is a graded lattice. In practice, the poset of cellular strings is typically not graded, which makes this a useful criterion for the all-coherence property. In the following proposition, we give a useful refinement to this criterion.

\begin{proposition}\label{prop_graded}
Let $(\Acal,c_0)$ be all-coherent, and let $r=\rk(\Acal)$.
\begin{enumerate}
\item\label{prop_graded_1} $\omega(\Acal,c_0)$ is graded and every maximal chain contains exactly $r$ elements.
\item\label{prop_graded_2} There does not exist a proper cellular string $\Fbf$ such that $\sum_{x\in\Fbf}(\rk x-1)\geq r-1$.
\end{enumerate}
\end{proposition}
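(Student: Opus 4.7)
The plan is to prove part (1) by direct appeal to the fiber polytope construction, and then prove part (2) by computing the rank of a cellular string in the graded poset $\omega(\Acal, c_0)$ in two independent ways via a product decomposition.

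For part (1), the argument is immediate. Since $(\Acal, c_0)$ is all-coherent, Corollary~\ref{cor_all_coh} combined with Theorem~\ref{thm_all_coh} identifies $\omega(\Acal, c_0)$ with the face lattice of the fiber polytope $\Sigma(Z, \pi)$, which has dimension $\dim Z - 1 = r - 1$. The face lattice of an $(r-1)$-dimensional polytope is graded with maximal chains containing exactly $r$ elements.

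For part (2), the strategy is to compute $\rk(\Fbf)$ in the graded poset $\omega(\Acal, c_0)$ in two ways. On the one hand, part (1) says the maximum rank $r - 1$ is attained only by the trivial cellular string $(Z)$, so a proper $\Fbf$ satisfies $\rk(\Fbf) \leq r - 2$. On the other hand, for $\Fbf = (F_1, \ldots, F_l)$ I plan to show $\rk(\Fbf) = \sum_i (\rk x_i - 1)$; combining the two bounds then yields $\sum_i (\rk x_i - 1) \leq r - 2 < r - 1$. The rank formula will come from a product decomposition of the principal order ideal below $\Fbf$, namely
$$\{\Gbf \in \omega(\Acal, c_0) : \Gbf \leq \Fbf\} \cong \prod_{i=1}^l \omega(F_i),$$
since refining $\Fbf$ amounts to independently refining each face $F_i$ viewed as a sub-zonotope with the restriction of $\pi$. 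To apply part (1) to each factor $\omega(F_i)$, I need each pair $(F_i, \pi|_{F_i})$ to itself be all-coherent. This will follow by extend-and-restrict: any cellular string $\Gbf^{(i)}$ on $F_i$ substitutes into $\Fbf$ to give a cellular string on $Z$, which is coherent by hypothesis, and Lemma~\ref{lem_coh_cell_str} shows that restricting a realizing $\psi$ to $F_i$ produces a functional that realizes $\Gbf^{(i)}$ on $F_i$. Part (1) applied to $(F_i, \pi|_{F_i})$ then gives that $\omega(F_i)$ is graded with top rank $\dim F_i - 1 = \rk x_i - 1$, so the product has top rank $\sum_i (\rk x_i - 1)$, matching $\rk(\Fbf)$.

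The main technical point to verify is the coherence-restriction step. The characterization in Lemma~\ref{lem_coh_cell_str} expresses coherence through the ratios $\psi(v_H)/\pi(v_H)$ that are intrinsic to each hyperplane $H$ and do not depend on the ambient arrangement, so these ratios restrict without change to the hyperplanes supporting $F_i$. Once this is checked, the rank-counting argument described above completes the proof.
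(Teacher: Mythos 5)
Your proof of part~(\ref{prop_graded_1}) is the same as the paper's (both appeal directly to Theorem~\ref{thm_all_coh}). For part~(\ref{prop_graded_2}), however, you take a genuinely different route. The paper's proof is local and iterative: given $\Fbf$ with a cell $x$ of rank $>1$, it picks a covector $y\geq x$ with $\rk y=\rk x-1$, lifts a cellular string of the localized matroid $\Acal_X$ through $y_X$ into $\Fbf$, and observes that this refinement drops $\sum(\rk z-1)$ by at most one. Iterating yields a chain of at least $r+1$ elements when $\sum(\rk x-1)\geq r-1$, contradicting (\ref{prop_graded_1}). Your proof instead is global and multiplicative: you decompose the principal order ideal below $\Fbf$ as $\prod_i\omega(F_i)$, prove all-coherence of each sub-zonotope $(F_i,\pi|_{F_i})$ by a correct extend-and-restrict argument through Lemma~\ref{lem_coh_cell_str} (the ratios $\psi(v_H)/\pi(v_H)$ are intrinsic to the generators of $F_i$), apply part~(\ref{prop_graded_1}) to each factor to compute $\rk\Fbf=\sum_i(\rk x_i-1)$, and then bound this by $r-2$ since the trivial string is the unique element of top rank in a graded poset with a $\hat{1}$. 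Both proofs are correct. Your approach yields more information as a byproduct (the exact rank formula, and that every face of an all-coherent zonotope is itself all-coherent), but at the cost of more machinery: the product decomposition of the order ideal and the all-coherence of the $F_i$ must be verified, whereas the paper's argument only needs that each localized matroid has \emph{some} cellular string through a given cocircuit. A small simplification of your route is available if you only want the bound, not the exact formula: since $\omega_{\coh}(F_i)$ is always the face lattice of a polytope of dimension $\dim F_i-1$, it already supplies a chain of length $\sum_i(\dim F_i-1)$ inside $\prod_i\omega(F_i)$, so one could skip the extend-and-restrict step and argue as the paper does from (\ref{prop_graded_1}) directly.
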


\begin{proof}
The first claim is immediate from Theorem~\ref{thm_all_coh}.

Let $\Fbf$ be a cellular string of $(\Acal,c_0)$, and suppose it contains a covector $x$ such that $\rk x>1$. Set $X=\supp x$. Let $y$ be any covector such that $x\leq y$ and $\rk x=\rk y+1$. Then $y_X$ is a cocircuit of $\Acal_X$, and there exists a cellular string $\Fbf^{\pr}$ of $(\Acal_X,(c_0)_X)$ containing $y_X$. This cellular string lifts to a partial cellular string of $\Acal$ between $x\circ c_0$ and $x\circ(-c_0)$. Replacing $x$ in $\Fbf$ by this partial cellular string, we obtain a new cellular string $\Fbf^{\pr}<\Fbf$ such that
$$1+\sum_{z\in\Fbf^{\pr}}(\rk z-1)\geq\sum_{z\in\Fbf}(\rk z-1).$$

Suppose $(\Acal,c_0)$ has a proper cellular string $\Fbf$ such that $\sum_{x\in\Fbf}(\rk x-1)\geq r-1$. Then we may apply the above construction repeatedly to obtain a chain with at least $r$ elements. Extending this chain by the trivial cellular string gives a chain of with at least $r+1$ elements, contradicting (\ref{prop_graded_1}).
\end{proof}


Proposition~\ref{prop_graded}(\ref{prop_graded_2}) gives a useful test for incoherence, as in the following example.

\begin{example}\label{example_rank3}
Let $Z$ be the image of the $4$-cube under the map
$$\pi=\left(\begin{matrix}1 & 0 & 1 & 0\\0 & 1 & 1 & 0\\0 & 1 & 0 & 1\end{matrix}\right)$$
and define $f:\Rbb^3\ra\Rbb$ such that $f(x,y,z)=x+y+z$. Let $\Mcal$ be the acyclic oriented matroid associated to $(Z,f)$ with ground set $\{a,b,c,d\}$ corresponding to the six columns of $\pi$. We order the ground set $a<b<c<d$ to agree with the ordering of the columns of $\pi$. Then $\Mcal$ has a unique circuit $(+,+,-,-)$, along with its negative $(-,-,+,+)$. The other $14$ elements of $\{+,-\}^{\{a,b,c,d\}}$ are maximal covectors of $\Mcal$.

The oriented matroid $\Mcal$ has a cellular string consisting of two cocircuits $(0,-,-,0)$ and $(+,0,0,+)$. But $(\rk(0,-,-,0) - 1) + (\rk(+,0,0,+) - 1) = 2 = \rk\Mcal-1$, so $(Z,f)$ is not all-coherent by Proposition~\ref{prop_graded}(\ref{prop_graded_2}).
\end{example}

\subsection{Matroid operations}\label{subsec_operations}

Let $V$ be a real vector space of dimension $r$. Fix an $r\times n$ real matrix $A$, and let $E$ be the set of column vectors of $A$. We let $Z=Z(A)$ and let $\pi$ be a linear functional on $\Rbb^r$ that is generic with respect to $Z$. This defines an acyclic oriented matroid $\Mcal$. Since the coherence of cellular strings is invariant under scaling the columns of $A$, we will assume throughout this section that $\pi(e)=1$ for all $e\in E$. Let $\Acal$ be the set of hyperplanes in $V^*$ for which $\Lcal(\Acal)$ is the normal fan of $Z$. Let $c_0$ be the chamber of $\Acal$ containing $\pi$. For simplicity, we will assume that the set $E$ does not contain any zero vectors or pairs of parallel vectors.

\begin{lemma}[cf. \cite{edman:2015diameter}(Corollary 5.2)]\label{lem_coloop}
Let $\wtil{Z}$ be the direct sum of $Z$ with an $m$-cube. We realize this zonotope by a matrix $\wtil{A}=A\oplus I_m$ where $I_m$ is the rank $m$ identity matrix. If $\wtil{\pi}$ is a generic linear functional on $\Rbb^{r+m}$ whose restriction to $Z$ is equal to $\pi$, then $(Z,\pi)$ is all-coherent if and only if $(\wtil{Z},\wtil{\pi})$ is all-coherent.
\end{lemma}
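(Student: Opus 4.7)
The plan is to use Lemma~\ref{lem_coh_cell_str} to translate coherence into the existence of a realizing linear functional, and then to exploit the orthogonal direct-sum structure of $\wtil{A}=A\oplus I_m$ to pass such functionals between $Z$ and $\wtil{Z}$. First I would normalize so that $\wtil{\pi}(v)=1$ for every $v\in\wtil{E}=E\sqcup\{f_1,\ldots,f_m\}$, where $f_1,\ldots,f_m$ denote the basis vectors from the $I_m$ summand; this is permitted by the scaling conventions at the start of Section~\ref{subsec_lem_galleries}. Because the $f_j$ span a summand orthogonal to $V$, every linear functional on $\Rbb^{r+m}$ splits uniquely as $\wtil{\psi}=\psi\oplus(\alpha_1,\ldots,\alpha_m)$ with $\psi\in V^*$ and $\alpha_j=\wtil{\psi}(f_j)$ independent real parameters.

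Next I would describe the cellular strings of $\wtil{Z}=Z\times[0,1]^m$ in terms of those of $Z$. Every face of $\wtil{Z}$ factors as $F\times G$ with $F$ a face of $Z$ and $G$ a face of $[0,1]^m$, so its generator set is the disjoint union of the generators of $F$ with the basis vectors corresponding to the free coordinates of $G$. Thus a cellular string of $\wtil{Z}$ induces an ordered set partition $(B_1,\ldots,B_l)$ of $\wtil{E}$; setting $B'_i:=B_i\cap E$ and deleting the blocks with $B'_i=\emptyset$ (which correspond to $F_i$ being a vertex of $Z$, so the step lies entirely in coloop directions) yields an ordered set partition of $E$ that is a cellular string of $Z$. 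Conversely, any cellular string of $Z$ lifts to a cellular string of $\wtil{Z}$, for instance by appending $\{f_1\},\ldots,\{f_m\}$ as singleton blocks at the end.

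The heart of the proof is the equivalence: a cellular string of $\wtil{Z}$ is coherent if and only if its $Z$-projection is a coherent cellular string of $Z$. One direction is immediate by restricting the realizing $\wtil{\psi}$ to $V$, since the equalities and inequalities of Lemma~\ref{lem_coh_cell_str} restrict verbatim. For the other direction, given $\psi$ realizing the projection with constant values $\beta_1<\cdots<\beta_k$ on its blocks, I would set $\alpha_j:=\beta_s$ whenever $f_j$ lies in a mixed block whose $E$-part is the $s$-th non-empty projected block, and for each pure-coloop block $B_i$ choose a common $\wtil{\psi}$-value on $B_i$ lying strictly between the forced values on its neighbors. The strict inequalities $\beta_1<\cdots<\beta_k$ leave enough room for any finite number of pure-coloop blocks sandwiched between, before, or after the mixed blocks, so Lemma~\ref{lem_coh_cell_str} confirms coherence of the original cellular string of $\wtil{Z}$.

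With this equivalence in hand, both directions of the lemma fall out. If $(\wtil{Z},\wtil{\pi})$ is all-coherent, then any cellular string of $Z$ lifts to a (necessarily coherent) one on $\wtil{Z}$, whose realizing $\wtil{\psi}$ restricts to a $\psi$ witnessing coherence of the original. Conversely, if $(Z,\pi)$ is all-coherent, then every cellular string of $\wtil{Z}$ projects to a coherent string on $Z$, which extends back up by the construction above. The main obstacle I anticipate is the combinatorial bookkeeping in the projection/lifting step—in particular, verifying that the $Z$-projection really produces a cellular string rather than a degenerate object. This reduces to the observation that the blocks collapsed under projection are exactly those indexed by faces $F_i$ that are vertices of $Z$, which is forced since $\pi$ is generic on $Z$ and hence no positive-dimensional face of $Z$ is constant on $\pi$.
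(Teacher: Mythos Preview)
Your proposal is correct and follows essentially the same approach as the paper: project cellular strings of $\wtil{Z}$ to $Z$ by deleting the coloop elements, restrict a realizing functional for one direction of the coherence equivalence, and extend it by assigning values on the coloops for the other. The only difference is that the paper first reduces to $m=1$ by induction, so that only a single coloop value needs to be chosen; your direct handling of general $m$ works too but requires the extra bookkeeping you describe for consecutive pure-coloop blocks.
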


\begin{proof}
It suffices to prove the lemma for $m=1$ since if $m>1$, the zonotope $\wtil{Z}$ decomposes as $(Z\oplus[0,1]^{m-1})\oplus[0,1]$, and the result follows by induction. Hence, we will assume $m=1$ for the remainder of the proof.

Let $e$ be the unit vector in the last coordinate of $\Rbb^{r+1}$. Let $\wtil{\pi}$ be a generic linear functional on $\Rbb^{r+1}$ whose restriction to $Z$ is $\pi$. We may assume that $\wtil{\pi}(e)=1$ without affecting coherence of a cellular string on $\wtil{Z}$.

Let $\wtil{\gamma}$ be a cellular string on $\wtil{Z}$. Let $\gamma$ be the cellular string on $Z$ obtained by deleting $e$ from $\wtil{\gamma}$. Suppose $\gamma$ is $\pi$-coherent. Then there exists a linear functional $\psi$ on $\Rbb^r$ that picks out $\gamma$. Let $(E_1,\ldots,E_l)$ be the ordered set partition of $\{a_1,\ldots,a_n\}$ induced by $\gamma$. This means for $a\in E_i,\ b\in E_j$, $\psi(a)\leq\psi(b)$ if $i\leq j$.

We define a linear functional $\wtil{\psi}$ that determines the string $\wtil{\gamma}$ as follows. For each $a_i$, set $\wtil{\psi}(a_i)=\psi(a_i)$. If $e$ is added to a block $E_i$ to form $\wtil{\gamma}$, then set $\wtil{\psi}(e)=\psi(a)$ for some $a\in E_i$. If $e$ is in its own block between $E_i$ and $E_{i+1}$, then set $\wtil{\psi}(e)$ to be a value strictly between $\psi(a)$ and $\psi(b)$ where $a\in E_i,\ b\in E_{i+1}$. In either case, we have produced the desired linear functional.

Conversely, if we start with a $\wtil{\pi}$-coherent string $\wtil{\gamma}$ induced by $\wtil{\psi}$, then $\psi$ induces $\gamma$ on $Z$. This completes the proof.
\end{proof}

For oriented matroids $(\Mcal,E),(\Mcal^{\pr},E^{\pr})$ on disjoint ground sets $E,E^{\pr}$, the \emph{direct sum} $\Mcal\oplus\Mcal^{\pr}$ of $\Mcal$ and $\Mcal^{\pr}$ is an oriented matroid with covectors $\Lcal(\Mcal)\times\Lcal(\Mcal^{\pr})$. An oriented matroid is \emph{indecomposable} if cannot be expressed as a direct sum of two nontrivial oriented matroids. It is straightforward to show that the circuits of $\Mcal\oplus\Mcal^{\pr}$ is the union of $\Ccal(\Mcal)\times\{0\}$ and $\{0\}\times\Ccal(\Mcal^{\pr})$ \cite[Proposition 7.6.1]{bjorner.lasVergnas.ea:oriented}.

\begin{lemma}\label{lem_decomposable}
Let $\Mcal$ be an acyclic oriented matroid that decomposes as $\Mcal_1\oplus\Mcal_2$. If both $\Mcal_1$ and $\Mcal_2$ have a circuit supported by at least 3 elements, then $\Mcal$ is not all-coherent.
\end{lemma}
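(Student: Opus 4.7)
The plan is to apply Proposition~\ref{prop_graded}(\ref{prop_graded_2}): I will exhibit a proper cellular string $\Fbf$ of $\Mcal$ with $\sum_{x \in \Fbf}(\rk x - 1) \geq r - 1$, where $r = r_1 + r_2$ and $r_i = \rk \Mcal_i$. The basic structural fact to exploit is that covectors of $\Mcal_1 \oplus \Mcal_2$ factor as pairs $(x_1, x_2)$ with $x_i \in \Lcal(\Mcal_i)$ and $\rk(x_1, x_2) = \rk x_1 + \rk x_2$, so cellular strings of $\Mcal$ arise by interleaving cellular-string-like sequences in each factor.

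The naive construction corresponds to the face sequence $(Z_1 \times \{v_2^b\}, \{v_1^t\} \times Z_2)$; its rank sum is $(r_1 - 1) + (r_2 - 1) = r - 2$, one unit short. I split into cases based on whether one of the factors is itself all-coherent. In the easier case where some $\Mcal_i$ is not all-coherent, Proposition~\ref{prop_graded}(\ref{prop_graded_2}) applied to that factor produces a cellular string $(F_i^b, F_i^t)$ of $Z_i$ with $\rk F_i^b + \rk F_i^t \geq r_i + 1$. Splicing into the naive string via the face sequence $(Z_1 \times F_2^b, \{v_1^t\} \times F_2^t)$ (if it is $\Mcal_2$ that fails) raises the total to $(r_1 + \rk F_2^b - 1) + (\rk F_2^t - 1) = r_1 - 2 + (\rk F_2^b + \rk F_2^t) \geq r - 1$, and one checks the resulting sequence is a valid cellular string of $\Mcal$ because the vertex-matching conditions reduce componentwise.

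In the harder case where both $\Mcal_1$ and $\Mcal_2$ are themselves all-coherent, the whole unit must come from interleaving. The key observation is that because $\Mcal_i$ has a circuit of size $\geq 3$, a monotone path in $Z_i$ uses $n_i \geq r_i + 1$ edges. Choose such monotone paths in each factor and pair their edges into rank-$2$ ``rhombus'' covectors $(x_j^{(1)}, x_j^{(2)})$ of $\Mcal$; using $k$ such pairs contributes $k$ to the target sum, with unpaired edges contributing $0$. In the rank-$2$ base case $\Mcal_i \cong U_{2, n_i}$ this gives $k = 3$ rhombi summing to $r - 1$ directly, with $\text{hex} \oplus \text{hex}$ as the prototype. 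For higher-rank all-coherent factors, Lemma~\ref{lem_coloop} first strips coloops to reduce to essential matroids, and then one of the rhombi is absorbed into the rank-$(k_i - 1)$ flat spanned by the circuit $C_i$, promoting that covector's rank so that the total again reaches $r - 1$. The principal obstacle is verifying the cellular-string matching conditions after this absorption: the elements of $C_i$ must be placed at contiguous positions along the chosen monotone path so that the enlarged face sits correctly in the $\pi$-direction, which I expect to arrange using the rigidity imposed by all-coherence in each factor.
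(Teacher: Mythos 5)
Your instinct to apply Proposition~\ref{prop_graded}(\ref{prop_graded_2}) via a cellular string built from the two factors is the right one, and your observation that the ``naive'' two-cell string only gives $r-2$ so that a unit must be recovered by interleaving is a genuine insight that the paper's construction also exploits. However, both branches of your case split have real problems, and the case split itself is unnecessary.

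In your Case 1 you invoke Proposition~\ref{prop_graded}(\ref{prop_graded_2}) in the wrong direction: the proposition says that \emph{all-coherence implies} no bad cellular string exists, but you need ``$\Mcal_i$ not all-coherent implies a bad cellular string of $\Mcal_i$ exists.'' That converse is not available at this point in the paper; it only follows from the final classification theorem, which would make the argument circular. So Case 1 is a gap unless you supply an independent argument.

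In your Case 2 the rhombus-counting idea needs $\min(n_1,n_2)\geq r-1=r_1+r_2-1$ to produce enough rank-$2$ cells, but the circuit hypothesis only gives $n_i\geq r_i+1$, which is far weaker (e.g.\ $U_{2,3}\oplus U_{3,4}$ has $n_1=3<4=r-1$). The ``absorption'' step you propose to fix this is left unverified, as you acknowledge, and is where the real work lies.

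The paper avoids all of this with a single uniform construction. In each factor $\Mcal_i$, the hypothesis of a circuit of size $\geq 3$ yields a cocircuit $x_i$ whose support has rank $\geq 2$ (take two elements $c_1,c_2$ of the circuit, extend the rest of the circuit to a basis $B$, and let the hyperplane be $\overline{B\setminus c_2}$; then $c_1,c_2$ lie outside it and are independent). One then builds a cellular string $\Fbf_i$ of $\Mcal_i$ whose cells are $x_i$ together with edges; the rank condition forces $\Fbf_i$ to have at least $3$ cells. Taking the coordinatewise direct sum of $\Fbf_1$ and $\Fbf_2$ (padding the shorter string by composing with the top chamber of the finished factor) gives a proper cellular string of $\Mcal$ whose rank sum is at least $r-1$, and Proposition~\ref{prop_graded}(\ref{prop_graded_2}) finishes it. This works whether or not the individual $\Mcal_i$ are all-coherent, so no case split is needed. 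I'd suggest abandoning the case split and the rhombus heuristic in favor of this direct-sum string built from a well-chosen cocircuit in each factor.
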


\begin{proof}
Let $E_i$ be the ground set of $\Mcal_i$ for $i=1,2$. Assume that $\Mcal_1$ and $\Mcal_2$ each have a circuit supported by at least 3 elements. Then for $i=1,2$ there exists $x_i\in\Lcal(\Mcal_i)$ such that $x_i$ is a corank 1 covector and $\rk\Mcal_i\setm x_i^{-1}(0)\geq 2$.

Let $\Fbf_i=(y_1^{(i)},y_2^{(i)},\ldots,y_{l_i}^{(i)})$ be a cellular string of $\Mcal_i$ containing $x_i$ such that each cell $y_j^{(i)}$ is rank 1 unless $y_j^{(i)}=x_i$ for $i=1,2$. Without loss of generality we may assume that $l_1\leq l_2$. By the assumption on $x_i$, we have $l_1\geq 3$. We define a new string $\Fbf=(y_1,\ldots,y_m)$ for $\Mcal$ where $y_j=y_j^{(1)}\oplus y_j^{(2)}$ if $j\leq l_1$ and $y_j=+^{E_1}\oplus y_j^{(2)}$ if $l_1<j\leq l_2$. Then

\begin{align*}
\sum(\rk y_j-1) &= \sum_{j=1}^{l_1}\rk y_j^{(1)}+\sum_{j=1}^{l_2}(\rk y_j^{(2)}-1)\\
&\geq (\rk\Mcal_1+1)+(\rk\Mcal_2-1)\\
&=\rk\Mcal.
\end{align*}

By Proposition~\ref{prop_graded}, we conclude that $\Mcal$ is not all-coherent.
\end{proof}

\begin{example}\label{example_rank4}
Let $Z$ be the image of the $6$-cube under the map
$$\left(\begin{matrix}1 & 0 & .5 & 0 & 0 & 0\\0 & 1 & .5 & 0 & 0 & 0\\0 & 0 & 0 & 1 & 0 & .5\\0 & 0 & 0 & 0 & 1 & .5\end{matrix}\right)$$
and define $\pi:\Rbb^4\ra\Rbb$ such that $\pi(w,x,y,z)=w+x+y+z$. We note that $\pi(v)=1$ for each column vector $v$. Let $\Mcal$ be the acyclic oriented matroid associated to $(Z,\pi)$ with ground set $\{a,b,c,d,e,f\}$ corresponding to the four columns of the matrix. Then $\Mcal$ decomposes into two rank 2 matroids, each of corank 1. Hence, $(Z,\pi)$ is not all-coherent.

In particular, we claim that the cellular string $\{0--0--,\ +--+-0,\ +-0+-+,\ +0++0+\}$ is not coherent. Suppose to the contrary that it is picked out by a linear functional $\psi:\Rbb^4\ra\Rbb$. By definition, this means
$$\psi(a)=\psi(d)<\psi(f)<\psi(c)<\psi(b)=\psi(e).$$
But the first and last equality imply
$$2\psi(c) = \psi(a)+\psi(b) = \psi(d)+\psi(e)=2\psi(f),$$
contradicting the inequality $\psi(f)<\psi(c)$.
\end{example}


The last general construction we consider are weak map images. Given two oriented matroids $\Mcal,\Mcal^{\pr}$ with the same ground set $E$, there is a \emph{weak map} $\Mcal\rsqa\Mcal^{\pr}$ if for every circuit $x$ of $\Mcal$, there exists a circuit $y$ of $\Mcal^{\pr}$ such that $y\leq x$. That is, every circuit of $\Mcal$ ``contains'' a circuit of $\Mcal^{\pr}$. Geometrically, a weak map image of a vector configuration is obtained by moving the vectors to a more special position. The map is said to be \emph{rank-preserving} if $\Mcal$ and $\Mcal^{\pr}$ have the same rank. We remark that if $e\in E$, there exist weak maps $\Mcal\rsqa\Mcal/e$ and $\Mcal\rsqa\Mcal\setm e$. Here, we view the contraction $\Mcal/e$ as an oriented matroid on $E$ where $e$ is a coloop, and we view the deletion $\Mcal\setm e$ as an oriented matroid on $E$ where $e$ is a loop. In this way, the weak maps $\Mcal\rsqa\Mcal/e$ and $\Mcal\rsqa\Mcal\setm e$ are both rank-preserving.

\begin{proposition}\label{prop_weak}
Let $\Mcal,\Mcal^{\pr}$ be acyclic oriented matroids with the same rank. Assume that there is a rank-preserving weak map $\Mcal\rsqa\Mcal^{\pr}$. If $\Mcal$ is all-coherent, then so is $\Mcal^{\pr}$.
\end{proposition}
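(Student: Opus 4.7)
My plan is to argue the contrapositive: assuming $\Mcal^{\pr}$ is not all-coherent, I will exhibit a witness of non-all-coherence in $\Mcal$. The natural witness to pursue is the one supplied by Proposition~\ref{prop_graded}(\ref{prop_graded_2}): a proper cellular string $\Fbf^{\pr} = (x^{\pr}_1, \ldots, x^{\pr}_l)$ of $\Mcal^{\pr}$ satisfying $\sum_i (\rk x^{\pr}_i - 1) \geq r - 1$. The rank-preserving weak map should let me lift $\Fbf^{\pr}$ to a proper cellular string $\Fbf$ of $\Mcal$ with the same rank-sum bound, contradicting all-coherence of $\Mcal$.

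Let $(E_1, \ldots, E_l)$ be the ordered set partition of $\Fbf^{\pr}$, where $E_i = Z(x^{\pr}_i)$. My plan is to refine each block $E_i$ into a cellular string $(E_i^{(1)}, \ldots, E_i^{(k_i)})$ of the sub-OM $\Mcal|_{E_i}$ compatible with the signs forced by $\Fbf^{\pr}$ on $E \setminus E_i$, and concatenate these refinements to form $\Fbf$. Such refinements always exist (for instance, any monotone path of $\Mcal|_{E_i}$ works), so the existence of $\Fbf$ as a cellular string of $\Mcal$ is automatic. The essential per-block inequality to verify is
$$\sum_j (\rk_\Mcal(E_i^{(j)}) - 1) \geq \rk_{\Mcal^{\pr}}(E_i) - 1.$$
This would combine subadditivity of matroid rank ($\sum_j \rk_\Mcal(E_i^{(j)}) \geq \rk_\Mcal(E_i)$) with the weak-map inequality $\rk_\Mcal \geq \rk_{\Mcal^{\pr}}$; after rearrangement it reduces to the block-count bound $k_i \leq \rk_\Mcal(E_i) - \rk_{\Mcal^{\pr}}(E_i) + 1$. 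Summing the block inequalities over $i$ then yields $\sum_\Fbf(\rk_\Mcal - 1) \geq \sum_{\Fbf^{\pr}}(\rk_{\Mcal^{\pr}} - 1) \geq r - 1$, giving the desired contradiction via Proposition~\ref{prop_graded}(\ref{prop_graded_2}).

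The main obstacle is showing that a valid refinement with $k_i \leq \rk_\Mcal(E_i) - \rk_{\Mcal^{\pr}}(E_i) + 1$ always exists — i.e., that each unit of rank excess of $\Mcal$ over $\Mcal^{\pr}$ on the flat $E_i$ can be absorbed by exactly one extra subdivision. This structural claim requires a careful analysis of how the restrictions $\Mcal|_{E_i}$ and $\Mcal^{\pr}|_{E_i}$ interact under the induced (not necessarily rank-preserving) weak map on $E_i$, and is the technical heart of the argument; the hexagon-to-parallelogram specialization is the prototypical example, where the inequality is saturated with equality. Failures of the gradedness condition Proposition~\ref{prop_graded}(\ref{prop_graded_1}) should be handled analogously by lifting maximal chains of $\omega(\Mcal^{\pr})$ back through the block-refinement construction.
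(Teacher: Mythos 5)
The paper does not attempt the direct argument you are proposing: it observes that the classification in Section~\ref{sec_classification} is closed under rank-preserving weak map images, and explicitly remarks that it does not have a direct proof. So if your approach could be completed, it would be a genuine contribution. As written, however, it has two substantive gaps beyond the one you flag.

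First, your opening step already leans on the full classification. Proposition~\ref{prop_graded}(\ref{prop_graded_2}) only says that all-coherence forbids a high-rank-sum cellular string; you need the converse (non-all-coherence implies such a string exists) to obtain $\Fbf^{\pr}$. That converse is exactly the hard direction (2)$\Ra$(1) of the classification theorem, proved via (2)$\Ra$(3)$\Ra$(1). So even granting your lifting claim, the argument is not independent of the classification. Moreover, your closing remark that ``failures of Proposition~\ref{prop_graded}(\ref{prop_graded_1}) should be handled analogously'' does not patch this: a priori $\Mcal^{\pr}$ could fail to be all-coherent while satisfying both (\ref{prop_graded_1}) and (\ref{prop_graded_2}); only the classification rules that out, and only for realizable $\Mcal^{\pr}$.

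Second, the assertion that ``the existence of $\Fbf$ as a cellular string of $\Mcal$ is automatic'' is not justified. Choosing, for each $i$, a monotone path of $\Mcal|_{E_i}$ produces a sequence of sign vectors that need not be covectors of $\Mcal$: a covector of the restriction $\Mcal|_{E_i}$ is only guaranteed to extend to \emph{some} covector of $\Mcal$, not to one with the prescribed signs ($+$ on $E_1\cup\dotsb\cup E_{i-1}$, $-$ on $E_{i+1}\cup\dotsb\cup E_l$). What you actually need is that the maximal covectors $T_j = x^{\pr}_j\circ(-c_0)$ appearing along $\Fbf^{\pr}$, which are topes of $\Mcal^{\pr}$, are also topes of $\Mcal$. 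This does hold for rank-preserving weak maps, but it is a nontrivial fact (see the weak-map results of \cite[Chapter~7]{bjorner.lasVergnas.ea:oriented}) and should be cited, and it only delivers the singleton refinement $k_i=|E_i|$, which is far weaker than your target bound $k_i \le \rk_\Mcal(E_i) - \rk_{\Mcal^{\pr}}(E_i) + 1$. You correctly identify that bound as the technical heart, but it is not merely a computation: it requires exhibiting a cellular string of $\Mcal$ whose blocks inside $E_i$ have large rank, and there is no obvious mechanism forcing such coarse covectors of $\Mcal$ to exist with the required signs on $E\setminus E_i$. Until that is proved, the argument does not close.
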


To prove this proposition, one may observe that the list of oriented matroids in the classification in Section~\ref{sec_classification} is closed under rank-preserving weak map images. Unfortunately, we do not have a direct proof.

The rank preservation assumption in Proposition~\ref{prop_weak} is necessary since the $n$-cube $I^n$ is all-coherent, but every oriented matroid on $E=\{1,\ldots,n\}$ is a weak map image of the free oriented matroid $\Mcal(I^n)$, including incoherent ones.

\section{Classification}\label{sec_classification}

\subsection{Affine models and monotone families}

Let $Z$ be a zonotope generated by $v_1$, \dots, $v_n \in V$ and let $\pi \in V^\ast$ be a generic linear functional. Let $H_\pi$ be the hyperplane $\{ x \in V : \pi(x) = 1 \}$. We define
\[
\Pcal(Z,\pi) := \bigcup_{i=1}^n (\{ tv_i : t \in \Rbb \} \cap H_\pi).
\]
Thus, $\Pcal(Z,\pi)$ is a set of $n$ points in $H_\pi$. Note that the oriented matroid of $Z$ depends only on the oriented matroid of $\Pcal(Z,\pi)$ as an affine point set. In particular, the all-coherence property depends only on this oriented matroid. We will call any set of points in affine space with the same oriented matroid as $\Pcal(Z,\pi)$ an \emph{affine model} for $(Z,\pi)$.

Given an affine space $W$, we define a \emph{(closed) half-space} of $W$ to be a subset of the form $\{x \in W : \psi (x) \le c \}$ for some affine map $\psi : W \to \Rbb$ and $c \in \Rbb$. Note that $\emptyset$ and $W$ are half-spaces of $W$ in this definition. We denote the set of all half-spaces of $W$ by $HS(W)$, and give it a topology in the obvious way. For $K \in HS(W)$, we denote by $\partial K$ and $K^\circ$ the boundary and interior, respectively, of $K$ as a subspace of $W$.

If $\Pcal$ is a finite set of points in $W$, we define a \emph{monotone family} with respect to $\Pcal$ to be a continuous path $\{K_t\}_{t \in [0,1]}$ in $HS(W)$ such that
\begin{enumerate}
\item $K_0 = \emptyset$
\item $K_1 = W$
\item For every $p \in \Pcal$, there exists $t_p \in (0,1)$ such that $p \in \partial K_{t_p}$ and $p \in K_{t}^\circ$ for all $t > t_p$.
\end{enumerate} 
To each monotone family $\{K_t\}_{t \in [0,1]}$ with respect to $\Pcal$ we associate an ordered set partition of $\Pcal$ as follows: Let $t_1 < \dotsb < t_l$ be all the numbers in $(0,1)$ such that $\partial K_{t_i} \cap \Pcal \neq \emptyset$. Then the ordered set partition associated to $\{K_t\}_{t \in [0,1]}$ is
\[
(\partial K_{t_1} \cap \Pcal, \partial K_{t_2} \cap \Pcal, \dotsc, \partial K_{t_l} \cap \Pcal).
\]
We call any partition of $\Pcal$ obtained in this way a \emph{monotone partition}. As the following proposition states, monotone partitions of affine models are in bijection with proper cellular strings.

\begin{proposition} \label{monoparttostring}
Let $\Pcal$ be an affine model for $(Z,\pi)$. Then there is a bijection from the set of all monotone partitions of $\Pcal$ to the set of all proper cellular strings of $(Z,\pi)$, and such that the image of $(E_1,\dotsc,E_l)$ under this bijection is $(F_1,\dotsc,F_l)$ where $\dim(F_i) = \lvert E_i \rvert$.
\end{proposition}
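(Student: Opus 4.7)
I would build the bijection by hand and verify both directions.

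Forward map: Given a monotone family $\{K_t\}_{t\in[0,1]}$ with associated ordered partition $(E_1,\ldots,E_l)$, each critical time $t_i$ yields an affine hyperplane $h_i = \partial K_{t_i} \subset H_\pi$ containing exactly the points of $E_i$ among those not yet absorbed, with $\bigcup_{j<i}E_j$ in the interior of $K_{t_i}$ and $\bigcup_{j>i}E_j$ in the exterior. Since $h_i$ does not pass through the origin of $V$, its linear span $\tilde h_i \subset V$ is a hyperplane through the origin; the sign vector $y_i \in \{+,-,0\}^E$ recording which side of $\tilde h_i$ each generator of $Z$ lies on is a covector of the oriented matroid of $Z$. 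I send $(E_1,\ldots,E_l)$ to the sequence $(F_1,\ldots,F_l)$, where $F_i$ is the face of $Z$ determined by $y_i$. By construction, the zero set of $y_i$ picks out exactly the generators of $F_i$ (under the identification $v \mapsto v/\pi(v)$), so the dimension claim $\dim F_i = |E_i|$ follows from the standard face-generator correspondence for zonotopes.

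Inverse map: Given a proper cellular string with covectors $y_1,\ldots,y_l$, let $\tilde h_i$ be the linear hyperplane realizing $y_i$ and set $h_i := \tilde h_i \cap H_\pi$. I construct $\{K_t\}_{t\in[0,1]}$ piecewise. On each open interval $(t_i, t_{i+1})$, the half-space $K_t$ must strictly contain $\bigcup_{j\le i}E_j$ in its interior and $\bigcup_{j>i}E_j$ in its exterior. The set of such half-spaces is an open, convex subset of $HS(H_\pi)$, and it is nonempty by the strict separation built into the cellular string composition axioms of Section~\ref{subsec:covector}; hence it is path-connected, and I use a path in it to connect $h_i$ to $h_{i+1}$ continuously. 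At the endpoints, I degenerate $K_t$ to $\emptyset$ as $t \to 0^+$ and to $H_\pi$ as $t \to 1^-$ by letting the appropriate affine function's level go to $\pm\infty$.

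The main obstacle is continuity at the critical times $t_i$: $\partial K_t$ must pass through every point of $E_i$ simultaneously at $t = t_i$ and through none of $\Pcal$ otherwise. I would parametrize half-spaces near $h_i$ as $\{\psi \le c\}$ for an affine function $\psi$ on $H_\pi$ vanishing on $h_i$, positive on $\bigcup_{j<i}E_j$, and negative on $\bigcup_{j>i}E_j$ (such a $\psi$ exists by the separation property of $y_i$). Taking $c = c(t)$ continuous and monotonically decreasing through $0$ at $t_i$ gives the required simultaneous sweep across $E_i$, and the matching of one-sided limits at the critical times ensures a continuous path in $HS(H_\pi)$ overall. Once continuity is established, the verification that the two constructions are mutually inverse is immediate: the ordered partition extracted from a monotone family depends only on which points lie on $\partial K_{t_i}$, which is determined by the combinatorial data $(y_1,\ldots,y_l)$, and conversely the covectors $y_i$ are uniquely recovered from the partition via the forward map, so the two assignments invert each other on the level of ordered set partitions.
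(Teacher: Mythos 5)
Your proof takes essentially the same approach as the paper: identify half-spaces of $H_\pi$ with linear functionals on $V$ and translate paths of half-spaces into cellular strings. The paper compresses this by naming the homeomorphism $\psi \mapsto \{x \in H_\pi : \psi(x) \ge 0\}$ from $S(V^*)$ to $HS(H_\pi)$ and invoking the correspondence between proper cellular strings and paths in $S(V^*)$ from $-\pi/\lVert\pi\rVert$ to $\pi/\lVert\pi\rVert$ that cross each $\{\psi : \psi(v_i) = 0\}$ exactly once; your forward and inverse constructions are the two halves of that correspondence unpacked by hand. One sign to fix in your inverse map: with $\psi$ positive on $\bigcup_{j<i}E_j$ and $K_t = \{\psi \le c(t)\}$ with $c(t)$ decreasing through $0$, the already-absorbed points leave $K_t$ at $t_i$ rather than remain inside, so you want $\psi$ negative on the absorbed set (equivalently, $c(t)$ increasing). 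Also, your claim that the set of strictly separating half-spaces is \emph{convex} is imprecise as stated, but under the identification with $S(V^*)$ it is the sphere's intersection with an open convex cone, hence geodesically convex and in particular connected, which is all you need for the interpolation.
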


\begin{proof}
We may assume $\Pcal = \Pcal(Z,\pi)$. We have a homeomorphism from $S(V^\ast) := \{ \psi \in V^\ast : \lVert \psi \rVert = 1\}$ to $HS(H_\pi)$ given by $\psi \mapsto \{ x \in H_\pi : \psi(x) \ge 0 \}$. Recall that each proper cellular string of $(Z,\pi)$ can be specified by a path $\gamma : [0,1] \to S(V^\ast)$ such that $\gamma(0) = -\pi/\Vert \pi \rVert$, $\gamma(1) = \pi/\Vert \pi \rVert$, and for each $i$, $\gamma$ intersects the hyperplane $\{ \psi \in V^\ast : \psi(v_i) = 0 \}$ exactly once. The image of the set of all such paths under the above homeomorphism is precisely the set of monotone families with respect to $\Pcal(Z,\pi)$. It is easy to check that this homeomorphism induces a bijection between the proper cellular strings of $(Z,\pi)$ and the monotone partitions of $\Pcal(Z,\pi)$, and this bijection satisfies the desired property.
\end{proof}

In the proof of classification we will need the following lemma.

\begin{lemma} \label{monofamconstruction}
Let $\Pcal$ be a finite set of points in an affine space $W$. Suppose that $K_1$, \dots, $K_m \in HS(W)$ are such that for each $1 \le i < m$, $(K_i \setminus K_{i+1}^\circ) \cap \Pcal = \emptyset$. Then there exists a monotone family with respect to $\Pcal$ containing $K_1$, \dots, $K_m$.
\end{lemma}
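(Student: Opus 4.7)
The plan is to realize $HS(W)$ as the quotient of affine functionals by positive scaling and build the monotone family by linearly interpolating representatives, adopting the convention $K = \{x \in W : \psi(x) \le 0\}$ (so positive-constant $\psi$ gives $\emptyset$ and nonpositive-constant $\psi$ gives $W$). I would fix affine representatives $\psi_1, \dotsc, \psi_m$ of $K_1, \dotsc, K_m$, set $\psi_0 \equiv 1$ and $\psi_{m+1} \equiv -1$, and on each subinterval $[i/(m+1), (i+1)/(m+1)]$ define $K_t = \{x \in W : \psi_t(x) \le 0\}$ with $\psi_t = (1-s)\psi_i + s\psi_{i+1}$ and $s$ the affine reparametrization onto $[0,1]$. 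This automatically gives $K_0 = \emptyset$, $K_1 = W$, and $K_{i/(m+1)} = K_i$ for each $i$, so the three containment/endpoint requirements in the definition are built in.

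The monotone-family condition at each $p \in \Pcal$ then reduces to a one-variable calculation. The function $f_p : t \mapsto \psi_t(p)$ is continuous and piecewise linear on $[0,1]$ with $f_p(0) = 1 > 0$ and $f_p(1) = -1 < 0$. The hypothesis $(K_i \setminus K_{i+1}^\circ) \cap \Pcal = \emptyset$ reads exactly ``$\psi_i(p) \le 0 \Rightarrow \psi_{i+1}(p) < 0$'', so on the segment from $\psi_i$ to $\psi_{i+1}$, whenever $\psi_i(p) \le 0$ the convex combination $\psi_s(p)$ is strictly negative for all $s \in (0,1]$. Hence once $f_p(t)$ becomes nonpositive it stays strictly negative, except possibly at isolated segment endpoints where $\psi_i(p) = 0$. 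Defining $t_p := \sup\{t \in [0,1] : f_p(t) = 0\}$, the supremum is attained by continuity, so $p \in \partial K_{t_p}$, and for every $t > t_p$ one has $f_p(t) < 0$, i.e.\ $p \in K_t^\circ$. The endpoint values $f_p(0) > 0 > f_p(1)$ force $t_p \in (0,1)$.

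The main obstacle I expect is continuity of $t \mapsto K_t$ in $HS(W)$, because the quotient map from affine functionals to half-spaces is continuous only where the functional is nonzero. The interpolation $(1-s)\psi_i + s\psi_{i+1}$ vanishes identically exactly when $\psi_i$ and $\psi_{i+1}$ are negative scalar multiples of each other, i.e.\ when $K_i$ and $K_{i+1}$ are complementary closed half-spaces. In that degenerate case $K_i \cap K_{i+1}^\circ = \emptyset$, so the hypothesis forces $\Pcal \cap K_i = \emptyset$, and I would then insert $\emptyset$ as an intermediate step, replacing the direct interpolation by the two-leg path $\psi_i \to 1 \to \psi_{i+1}$, each leg of which avoids the zero functional. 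Since no point of $\Pcal$ sits inside $K_i$ at that stage, the detour does not disturb the monotonicity analysis above. The same trick covers the corner cases in which some $K_i$ itself already equals $\emptyset$ or $W$, completing the construction.
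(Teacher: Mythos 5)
Your proof is correct, and it takes a genuinely cleaner route than the paper's, though the two are close in spirit. The paper interpolates geometrically: between $K_i$ and $K_{i+1}$ it rotates the boundary hyperplane about the codimension-$2$ subspace $\ell_i = \partial K_i \cap \partial K_{i+1}$ along the ``shortest arc'' in the circle $\Phi_i$ of half-spaces through $\ell_i$, and handles the endpoints by parallel translation. That construction tacitly assumes $\partial K_i$ and $\partial K_{i+1}$ intersect in codimension $2$ (it fails when they are parallel or equal, or when a $K_i$ is $\emptyset$ or $W$), and ``shortest arc'' becomes ambiguous exactly when $K_i$ and $K_{i+1}$ are complementary. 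Your version---identify $HS(W)$ with the sphere of affine functionals mod positive scaling via $K=\{\psi\le 0\}$, interpolate the functionals linearly, and cap the ends with the constants $\pm 1$---handles parallel or coincident boundaries with no special treatment, and reduces the monotone-family axiom for each $p$ to the one-variable statement that the piecewise-linear map $t\mapsto\psi_t(p)$ goes from $+1$ to $-1$ and, once nonpositive, stays strictly negative; this follows directly from the translation of the hypothesis into ``$\psi_i(p)\le 0\Rightarrow\psi_{i+1}(p)<0$.'' You also correctly isolate the genuine degeneracy (interpolation through the zero functional, i.e.\ $K_i$ and $K_{i+1}$ complementary), note that the hypothesis then forces $\Pcal\cap K_i=\emptyset$, and patch it by inserting $\emptyset$; the inserted pair still satisfies the hypothesis, so your earlier analysis applies unchanged.

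One small imprecision: your closing sentence says the same detour trick handles the cases $K_i=\emptyset$ or $K_i=W$. For an interior index $i$ with $K_i\in\{\emptyset,W\}$ there is in fact no degeneracy at all in your interpolation (the neighboring functionals have nonzero linear part, so the segment misses the zero functional), and for $K_1=W$ or $K_m=\emptyset$ the detour through $\emptyset$ does not actually repair the zero-crossing since the adjacent representative is already a constant; the simplest fix there is just to delete such a $K_1$ or $K_m$ from the list, as $W$ and $\emptyset$ occur automatically at the endpoints of any monotone family. This is a cosmetic issue---comparable, and arguably smaller, than the unaddressed degeneracies in the paper's rotation argument.
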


\begin{proof}
We define a family $\{L_t\}_{t \in (-\infty,\infty)}$ of half-spaces of $W$ as follows: For $i = 1$, \dots, $m$, let $L_i = K_i$. If $K_1 = \{x \in W : \psi_1 (x) \le c_1 \}$, then for $t < 1$ define
\[
L_t = \{ x \in W : \psi_1(x) \le c_1 + t - 1 \}
\]
and if $K_m = \{x \in W : \psi_m (x) \le c_m \}$, for $t > m$ define
\[
L_t = \{ x \in W : \psi_m(x) \le c_m + t - m \}.
\]
Finally, if $i < t < i+1$ for $i=1$, \dots, $m-1$, we define $L_t$ as follows. Let $\ell_i = \partial K_i \cap \partial K_{i+1}$. Then $\ell_i$ is a codimension 2 affine subspace of $W$, and the set
\[
\Phi_i := \{ K \in HS(W) : \partial K \supset \ell_i \}
\]
is homeomorphic to a circle and contains $K_i$, $K_{i+1}$. We let $\{ L_t \}_{t \in [i,i+1]}$ be the shortest arc in $\Phi_i$ with $L_i = K_i$ and $L_{i+1} = K_{i+1}$.

The condition $(K_i \setminus K_{i+1}^\circ) \cap \Pcal = \emptyset$ guarantees that in the above construction, for every $p \in \Pcal$ there exists $t_p \in (-\infty,\infty)$ such that $p \in \partial L_{t_p}$ and $p \in L_t^\circ$ for all $t > t_p$. Thus, setting $L_{-\infty} = \emptyset$ and $L_{\infty} = W$ we obtain a monotone family with respect to $\Pcal$ containing $K_1$, \dots, $K_m$.
\end{proof}

\subsection{Classification theorem}

To classify the all-coherent zonotopes, it suffices to give their affine models. Here are four such affine models:

\begin{enumerate}
\item The set $U_{2,n}$ of $n$ points in a straight line. This corresponds to a zonotope of dimension 2 and any generic linear functional.
\item For $r \ge 3$, the set $E_{r,m} = \{e_1,\dots,e_r,a_1,\dots,a_m\}$, where $e_1$, \dots, $e_r$ are the vertices of an $(r-1)$-dimensional simplex and $a_1$, \dots, $a_m$ are points in the interior of the simplex such that $a_1$, \dots, $a_m$, $e_r$ are collinear. This is the affine model for the all-coherent family in Section~\ref{subsec_allcoh_family}.
\item For $r \ge 3$, the set $\tilde{E}_{r,m}$, which is defined in the same way as $E_{r,m}$ except $a_1$ is on the boundary of the simplex.
\item The set $R_3$, the set of 6 points in $\Rbb^2$ shown in Figure~\ref{R3pic}. The corresponding zonotope $Z$ for $R_3$ is shown in Figure~\ref{fig_exceptional}, with $\pi$ the vertical direction in the figure. One can check that the poset of proper cellular strings of $(Z,\pi)$ is homoemorphic to a circle, which implies the all-coherence property for $R_3$.

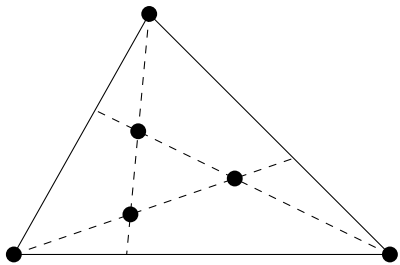
\begin{figure}
\begin{centering}
\begin{tikzpicture}[scale=2]
	\coordinate (A) at (0,0);
	\coordinate (B) at (.9,1.6);
	\coordinate (C) at (2.5,0);
	\coordinate (A') at ($.4*(B) + .6*(C)$);
	\coordinate (B') at ($.3*(C) + .7*(A)$);
	\coordinate (C') at ($.4*(A) + .6*(B)$);
	\draw (A) -- (B) -- (C)--cycle;
	\draw[dashed, name path=(AA')] (A) -- (A');
	\draw[dashed, name path=(BB')] (B) -- (B');
	\draw[dashed, name path=(CC')] (C) -- (C');
	\path[name intersections={of=(AA') and (BB'), by=D}];
	\path[name intersections={of=(BB') and (CC'), by=E}];
	\path[name intersections={of=(CC') and (AA'), by=F}];
	\foreach \point in {A,B,C,D,E,F}
		\fill[black] (\point) circle (1.5pt);
\end{tikzpicture}
\end{centering}
\caption{\label{R3pic} The affine model $R_3$}
\end{figure} 
\end{enumerate}

We now show that, up to the addition of coloops, these are the only all-coherent affine models.

\begin{theorem}
Let $Z$ be a zonotope of dimension $r$ and $\pi$ a generic linear functional. The following are equivalent.
\begin{enumerate}
\item $(Z,\pi)$ is all-coherent.
\item There does not exist a proper cellular string $(F_1,\dots,F_l)$ of $(Z,\pi)$ with
\[
\sum_{i=1}^l (\dim(F_i)-1) \ge r-1.
\]
\item The affine model for $(Z,\pi)$ is $E \cup L$, where $E$ is of the form $\emptyset$, $U_{2,n}$, $E_{r,m}$, $\tilde{E}_{r,m}$, or $R_3$ and $L$ is a (possibly empty) set of coloops of $E \cup L$.
\end{enumerate}

\begin{proof}
(1)$\Ra$(2) follows from Proposition~\ref{prop_graded}, and (3)$\Ra$(1) has been proven earlier. Our goal is to prove (2)$\Ra$(3).

Assume (2) holds. Let $\Pcal$ be an affine model for $(Z,\pi)$. We may assume $\Pcal$ is full-dimensional in $\Rbb^{r-1}$. For any polytope $Q$, let $V(Q)$ denote its vertex set. We first show the following.

\begin{claim}
The convex hull of $\Pcal$ is a simplex.
\end{claim}

\begin{proof}
Suppose the contrary, and let $Q$ be the convex hull of $\Pcal$. Let $F$ be any facet of $Q$. If there are at least two vertices of $Q$ not in $F$, then there is an edge $e$ of $Q$ disjoint from $F$. Otherwise, $Q$ is a pyramid over $F$, and since $Q$ is not a simplex, if we replace $F$ with any other facet of $Q$ then there will be two vertices of $Q$ not in $F$. So we may assume there is an edge $e$ of $Q$ disjoint from $F$.

Let $K_1$ be the half-space of $\Rbb^{r-1}$ which does not contain $Q$ and whose boundary supports $F$. Let $K_2$ be any half-space of $\Rbb^{r-1}$ which contains $Q$ and whose boundary supports $e$. Then $(K_1 \setminus K_2^\circ) \cap Q = \emptyset$, so by Lemma~\ref{monofamconstruction}, there is a monotone family with respect to $\Pcal$ containing $K_1$, $K_2$. In particular, the associated monotone partition of $\Pcal$ contains $V(F)$ and $V(e)$. Thus, by Proposition~\ref{monoparttostring}, there is a cellular string $(F_1,\dots,F_l)$ of $(Z,\pi)$ with
\[
\sum_{i=1}^l (\dim(F_i)-1) \ge (\lvert V(F) \rvert -1) + (\lvert V(e) \rvert - 1) \ge (r-2) + 1 = r-1.
\]
This contradicts (2), proving the claim.
\end{proof}

Now, let $\sigma$ denote the convex hull of $\Pcal$.

\begin{claim} \label{vertexonline}
If $a_1$, $a_2$ are distinct points in $\Pcal \setminus V(\sigma)$, then there exists $b \in V(\sigma)$ such that $a_1$, $a_2$, and $b$ are collinear. 
\end{claim}

\begin{proof}
Suppose the contrary. Let $\ell$ be the line through $a_1$ and $a_2$, and let $p_1$, $p_2$ be the endpoints of the segment $\ell \cap \sigma$. Let $C_1$ and $C_2$ be the vertex sets of the minimal faces of $\sigma$ containing $p_1$ and $p_2$, respectively. By assumption, $C_1$ and $C_2$ have at least two elements. It is thus possible to choose a partition $(B_1,B_2)$ of $B$ such that $C_i \not\subseteq B_j$ for all $i$, $j$.

Let $\sigma_1$, $\sigma_2$ be the faces of $\sigma$ with vertex sets $B_1$, $B_2$ respectively. Let $K_1$ be a half-space of $\Rbb^{r-1}$ which does not contain $\sigma$ and whose boundary supports $\sigma_1$. Let $K_2$ be a half-space of $\Rbb^{r-1}$ whose boundary contains $\ell$ and such that $\sigma_1 \subset K_2^\circ$ and $\sigma_2 \cap K_2 = \emptyset$. (This is possible by the definitions of $\sigma_1$ and $\sigma_2$.) Finally, let $K_3$ be a half-space of $\Rbb^{r-1}$ which contains $\sigma$ and whose boundary supports $\sigma_3$. Then $(K_1 \setminus K_2^\circ) \cap \sigma = (K_2 \setminus K_3^\circ) \cap \sigma = \emptyset$, so by Lemma~\ref{monofamconstruction} there is a monotone family with respect to $\Pcal$ containing $K_1$, $K_2$, $K_3$. Hence, there is a cellular string $(F_1,\dots,F_l)$ of $(Z,\pi)$ with
\[
\sum_{i=1}^l (\dim(F_i)-1) \ge (\lvert V(\sigma_1) \rvert - 1) + (\lvert \{a_1,a_2\} \rvert - 1) + (\lvert V(\sigma_2) \rvert - 1) = r-1
\]
which contradicts (2).
\end{proof}

We can now complete the proof. If $\Pcal \setminus V(\sigma)$ has at most one points, then it is easy to see that $\Pcal$ is of one of the forms described in (3). Assume $\Pcal \setminus V(\sigma)$ has at least two distinct points $a_1$, $a_2$, and let $\ell_{12}$ be the line through these points. By Proposition~\ref{vertexonline}, $\ell_{12}$ contains some vertex $b_{12}$ of $\sigma$. If every point of $\Pcal \setminus V(\sigma)$ is on $\ell_{12}$, then $\Pcal$ is of one of the forms described in (3), as desired.

Assume there is some $a_3 \in \Pcal \setminus V(\sigma)$ not on $\ell_{12}$. Then by Proposition~\ref{vertexonline}, there are vertices $b_{13}$ and $b_{23}$ of $\sigma$ such that $a_1$, $a_3$, $b_{13}$ lie on a line $\ell_{13}$ and $a_2$, $a_3$, $b_{23}$ lie on a line $\ell_{23}$. Moreover, since $a_3 \notin \ell_{12}$, $b_{12} \neq b_{13} \neq b_{12}$. Since $\ell_{12}$ and $\ell_{13}$ intersect at $a_1$, there is a 2-plane $P$ containing $\ell_{12}$ and $\ell_{13}$. Since $a_2$, $a_3 \in P$, we have $\ell_{23} \subset P$, and hence $b_{23} \in P$. Hence $a_1$, $a_2$, $a_3$, $b_{12}$, $b_{13}$, $b_{23}$ all lie in $P$, and thus $a_1$, $a_2$, $a_3$ are all in the face $F$ of $\sigma$ with vertices $b_{12}$, $b_{13}$, $b_{23}$. It is easy to check that these six points must be in the configuration $R_3$.

Now suppose that there is some $a_4 \in \Pcal \setminus (V(\sigma) \cup \{a_1,a_2,a_3\})$. The above argument works with $a_4$ instead of $a_3$. However, because $a_1$ is in the interior of $F$, $F$ is the only 2-face of $\sigma$ containing $a_1$, and hence $a_4 \in F$. However, it is easy to check that one cannot have 4 distinct points in a triangle, not all on the same line, such that the line through any two of them passes through a vertex of the triangle. Thus there is no such $a_4$. Hence, $\Pcal = R_3 \cup L$, where $L$ is a set of coloops of $\Pcal$. This concludes the proof.
\end{proof}
\end{theorem}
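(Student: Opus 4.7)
The plan is to prove the cyclic chain $(1) \Ra (2) \Ra (3) \Ra (1)$. The implication $(1) \Ra (2)$ is immediate from Proposition~\ref{prop_graded}(\ref{prop_graded_2}). For $(3) \Ra (1)$, one inspects each item on the list: $\emptyset$ and $U_{2,n}$ are trivial (dimension $\leq 2$), $E_{r,m}$ was shown to be all-coherent in Section~\ref{subsec_allcoh_family}, $\tilde{E}_{r,m}$ follows from $E_{r,m}$ via Proposition~\ref{prop_weak} as a rank-preserving weak map image, $R_3$ is the example of Figure~\ref{fig_exceptional}, and adjoining coloops preserves all-coherence by Lemma~\ref{lem_coloop}.

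The substance is $(2) \Ra (3)$. Via Proposition~\ref{monoparttostring}, condition (2) transfers to the affine model $\Pcal \subseteq \Rbb^{r-1}$ as the statement that no monotone partition $(E_1,\ldots,E_l)$ of $\Pcal$ satisfies $\sum_{i}(|E_i|-1) \geq r-1$. Lemma~\ref{monofamconstruction} is the main engine: any admissible sequence of half-spaces $K_1,\ldots,K_m$ (satisfying $(K_i \setminus K_{i+1}^\circ) \cap \Pcal = \emptyset$) extends to a monotone family whose associated partition has blocks $\partial K_i \cap \Pcal$. The job is thus to produce enough admissible pairs and triples to constrain the combinatorial type of $\Pcal$.

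I would execute two geometric reductions. First, let $\sigma$ denote the convex hull of $\Pcal$; then $\sigma$ must be a simplex. Otherwise one can select a facet $F$ and an edge $e$ of $\sigma$ with $V(e) \cap V(F) = \emptyset$, and the pair consisting of the outward half-space supporting $F$ and an inward half-space supporting $e$ is admissible, producing a partition with $\sum(|E_i|-1) \geq (r-2) + 1 = r-1$. Second, any two points $a_1,a_2 \in \Pcal \setminus V(\sigma)$ must lie on a line through some vertex of $\sigma$. Otherwise the line $\ell$ through them meets $\partial\sigma$ at interior points of two faces $\sigma_1,\sigma_2$, each with at least two vertices; one then builds an admissible triple $(K_1,K_2,K_3)$ where $K_1$ supports $\sigma_1$ from outside $\sigma$, $K_2$ has $\ell \subseteq \partial K_2$ with $\sigma_1 \subseteq K_2^\circ$ and $\sigma_2 \cap K_2 = \emptyset$, and $K_3$ supports $\sigma_2$ from outside. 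The resulting partition captures $V(\sigma_1)$, $\{a_1,a_2\}$, and $V(\sigma_2)$ in successive blocks, with total $\sum(|E_i|-1) \geq (|V(\sigma_1)|-1) + 1 + (|V(\sigma_2)|-1) \geq r-1$.

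With these two constraints, case analysis on $|\Pcal \setminus V(\sigma)|$ concludes. At most one extra point yields $\emptyset$, $U_{2,n}$, $E_{r,m}$, or $\tilde{E}_{r,m}$ after stripping coloops. Two or more extra points all lying on a single line through a vertex of $\sigma$ again yield $E_{r,m}$ or $\tilde{E}_{r,m}$. Otherwise, three extra points $a_1,a_2,a_3$ in general position force, via the collinearity condition applied to each pair, three distinct vertices $b_{12},b_{13},b_{23}$ of a common $2$-face of $\sigma$, exhibiting the configuration $R_3$; a hypothetical fourth extra point would also be forced into this $2$-face and then ruled out by the elementary observation that one cannot place four distinct points inside a triangle so that every pair is collinear with a vertex. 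The main obstacle I expect is the admissibility verification in the second reduction: $\sigma_1$ and $\sigma_2$ must be taken as the \emph{minimal} faces containing the endpoints of $\ell \cap \sigma$, and the half-space $K_2$ must be chosen so that no stray point of $\Pcal$ lies in $K_1 \setminus K_2^\circ$ or $K_2 \setminus K_3^\circ$, which is precisely where the argument is delicate.
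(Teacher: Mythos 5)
Your proof of $(2)\Ra(3)$ follows the paper's argument essentially step for step: the same Claim that the convex hull $\sigma$ of $\Pcal$ is a simplex (via the facet--disjoint-edge pair and Lemma~\ref{monofamconstruction}), the same collinearity-through-a-vertex Claim using the three half-spaces $K_1,K_2,K_3$, and the same closing case analysis leading to $\emptyset$, $U_{2,n}$, $E_{r,m}$, $\tilde E_{r,m}$, $R_3$, plus coloops. Your remark that the delicate point is verifying admissibility of the triple $(K_1,K_2,K_3)$ is exactly where the paper takes care: $\sigma_1,\sigma_2$ are chosen as faces whose vertex sets form a partition of $V(\sigma)$ with neither block containing either of the minimal faces through the endpoints of $\ell\cap\sigma$, which is what guarantees $(K_1\setminus K_2^\circ)\cap\sigma = (K_2\setminus K_3^\circ)\cap\sigma = \emptyset$.

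The one genuine problem is in your treatment of $(3)\Ra(1)$ for $\tilde E_{r,m}$: you invoke Proposition~\ref{prop_weak} (rank-preserving weak maps preserve all-coherence) to deduce all-coherence of $\tilde E_{r,m}$ from that of $E_{r,m}$. But the paper's only justification of Proposition~\ref{prop_weak} is the classification theorem itself (``one may observe that the list \dots is closed under rank-preserving weak map images. Unfortunately, we do not have a direct proof.''). In other words, Proposition~\ref{prop_weak} is a \emph{corollary} of the equivalence $(1)\LRa(3)$, and in particular of the direction $(3)\Ra(1)$ you are trying to establish. Citing it here is circular. A direct argument for $\tilde E_{r,m}$ is needed, for instance by adapting the explicit construction of the realizing functionals $\psi$ from Section~\ref{subsec_allcoh_family} to the degenerate configuration where $a_1$ lies on the facet opposite $e_r$ (which introduces a new circuit relation among $a_1,e_1,\dots,e_{r-1}$ and so changes the constraints the $\psi_j$ must satisfy). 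The paper is itself terse about this, saying only that $(3)\Ra(1)$ ``has been proven earlier,'' so the gap is not unique to your write-up; but your version makes the circularity explicit and should be repaired.
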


\section*{Acknowledgements}

The authors thank Victor Reiner for offering advice at several stages in this project. The second author was supported by NSF/DMS-1351590. The fourth author was partially supported by NSF/DMS-1148634. Part of this material was based on work supported by NSF/DMS-1440140 while the third and fourth authors were in residence at the Mathematical Sciences Research Institute in Fall 2017.

\bibliography{bib_coherent_strings}{}
\bibliographystyle{plain}

\end{document}